\newcolumntype{M}[1]{>{\centering\arraybackslash}m{#1}}
\def\expandafter\UrlBreaks\expandafter{\UrlBreaks
  \do\a\do\b\do\c\do\d\do\e\do\f\do\g\do\h\do\i\do\j
  \do\k\do\l\do\m\do\n\do\o\do\p\do\q\do\r\do\s\do\t
  \do\u\do\v\do\w\do\x\do\y\do\z\do\A\do\B\do\C\do\D
  \do\E\do\F\do\G\do\H\do\I\do\J\do\K\do\L\do\M\do\N
  \do\O\do\P\do\Q\do\R\do\S\do\T\do\U\do\V\do\W\do\X
  \do\Y\do\Z}
\newtheorem{theorem}{Theorem}[section]
\newtheorem{lemma}[theorem]{Lemma}
\newtheorem{question}[theorem]{Question}
\newtheorem{problem}[theorem]{Problem}
\newtheorem{claim}{Claim}[theorem]
\newtheorem*{claim*}{Claim}
\newtheorem*{theorem*}{Theorem}
\newtheorem*{prop*}{Proposition}
\newtheorem*{lemma*}{Lemma}
\newtheorem*{keyobservation*}{Key Observation}
\newtheorem*{conjecture*}{Conjecture}
\newtheorem*{mainthm*}{Main Theorem}
\numberwithin{equation}{section}
\theoremstyle{definition}
\newtheorem*{notation*}{Notation}
\newtheorem{defn}[theorem]{Definition}
\newtheorem{remark}[theorem]{Remark}
\newcommand{\R}[0]{\mathbb{R}}
\newcommand{\Q}[0]{\mathbb{Q}}
\newcommand{\N}[0]{\mathbb{N}}
\newcommand{\Z}[0]{\mathbb{Z}}
\newcommand{\M}[0]{\mathcal{M}}
\newcommand{\poly}[0]{\text{poly}}
\newcommand{\timesdots}[0]{\times\cdots\times}
\subjclass[2020]{Primary 05C35, 05C75. Secondary 03C45, 03C64.}
\keywords{Zarankiewicz problem, regularity lemma, distality, o-minimality}
\title[Zarankiewicz bounds from distal regularity lemma]{Zarankiewicz bounds from distal regularity lemma}
\author{Mervyn Tong}
\address{School of Mathematics, University of Leeds, Leeds LS2 9JT, United Kingdom}
\address{Correspondence: Department of Pure Mathematics and Mathematical Statistics, Centre for Mathematical Sciences, Wilberforce Road, Cambridge CB3 0WB, United Kingdom}
\email{hwmt3@cam.ac.uk}
\date{\today}
\begin{document}
\begin{abstract}
    Since K\H{o}v\'ari, S\'os, and Tur\'an proved upper bounds for the Zarankiewicz problem in 1954, much work has been undertaken to improve these bounds, and some have done so by restricting to particular classes of graphs. In 2017, Fox, Pach, Sheffer, Suk, and Zahl proved better bounds for semialgebraic binary relations, and this work was extended by Do in the following year to arbitrary semialgebraic relations. In this paper, we show that Zarankiewicz bounds in the shape of Do's are enjoyed by all relations satisfying the distal regularity lemma, an improved version of the Szemer\'edi regularity lemma satisfied by relations definable in distal structures (a vast generalisation of o-minimal structures).
\end{abstract}
\maketitle
\section{Introduction}\label{sectionintro}
A classical problem in graph theory is the Zarankiewicz problem, which asks for the maximum number of edges a bipartite graph with $n$ vertices in each class can have if it omits $K_{u,u}$, the complete bipartite graph with $u$ vertices in each class. In 1954, K\H{o}v\'ari, S\'os, and Tur\'an \cite{kovarisosturan} gave an upper bound of $O_u(n^{2-1/u})$. Remarkably, this remains the tightest known upper bound, although sharpness has only been proven for $u\in \{2,3\}$. In 2017, Fox, Pach, Sheffer, Suk, and Zahl \cite{fox} observed that this bound can be improved if the graph is semialgebraic.
\begin{theorem}[{Fox--Pach--Sheffer--Suk--Zahl \cite[Theorem 1.1]{fox}}]\label{fox}
    Let $E(x, y)$ be a semialgebraic relation on $\R$ with description complexity at most $t$. Let $d_1:=|x|$ and $d_2:=|y|$. Then, for all finite $P\subseteq \R^x$ and $Q\subseteq \R^y$ with $m:=|P|$ and $n:=|Q|$, if $E(P,Q)$ is $K_{u,u}$-free, then for all $\varepsilon>0$ we have
    \[|E(P,Q)|\ll_{u, d_1, d_2, t, \varepsilon}
    \begin{cases}
        m^{\frac{2}{3}}n^{\frac{2}{3}}+m+n&\text{if }d_1=d_2=2,\\
        m^{\frac{d_2(d_1-1)}{d_1d_2-1}+\varepsilon}n^{\frac{d_1(d_2-1)}{d_1d_2-1}}+m+n&\text{otherwise}.
    \end{cases}\]
\end{theorem}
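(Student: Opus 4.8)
The plan is to run the \emph{polynomial partitioning} method of Guth--Katz (in the form developed for point--variety incidences by Solymosi--Tao, Zahl, and others) inside a double induction, on the ambient dimension and on the number of points. Reformulate first: for $q\in Q$ put $E_q:=\{x\in\R^{d_1}:E(x,q)\}$, a semialgebraic set of complexity $O_t(1)$, so that $|E(P,Q)|=\sum_{q\in Q}|P\cap E_q|$ counts incidences between the points $P$ and the family $\{E_q:q\in Q\}$, with the incidence bipartite graph still $K_{u,u}$-free. A preliminary reduction (technical, and carried out in \cite{fox}) replaces this with a point--variety incidence problem in which each $E_q$ is a real variety of dimension at most $d_1-1$ and bounded degree; the full-dimensional parts of the original sets are split off and bounded separately. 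Dually, since $\{E_q\}_q$ is a semialgebraic family parametrised by $\R^{d_2}$, the same count equals the number of incidences between $Q\subseteq\R^{d_2}$ and the varieties $\{\{q:E(p,q)\}:p\in P\}$ in $\R^{d_2}$; this symmetry under $(d_1,m)\leftrightarrow(d_2,n)$ is what produces the two regimes (and the swap of the exponents $a:=\frac{d_2(d_1-1)}{d_1d_2-1}$ and $b:=\frac{d_1(d_2-1)}{d_1d_2-1}$) in the statement. Note the elementary identity $d_1(1-a)=b$, which is what makes the main calculation below balance. The base cases $d_1=1$ (the $E_q$ are $O(1)$-point sets, so $|E(P,Q)|\ll n$) and ``$m$ or $n$ bounded'' (use the K\H{o}v\'ari--S\'os--Tur\'an bound $O_u(n^{2-1/u})$ of \cite{kovarisosturan}) are immediate.

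For the inductive step, fix a large constant $D=D(\varepsilon,d_1,d_2,t,u)$ and apply polynomial partitioning to $P$: there is $f\in\R[x_1,\dots,x_{d_1}]$ of degree $O(D)$ such that $\R^{d_1}\setminus Z(f)$ has $O(D^{d_1})$ connected components (\emph{cells}), each meeting $P$ in $\ll m/D^{d_1}$ points. For the incidences with points in cells, use that a variety of dimension $\le d_1-1$ and bounded degree meets only $O(D^{d_1-1})$ of the cells (a Milnor--Thom / Barone--Basu bound on how many cells of a degree-$D$ partition a bounded-degree variety can cross). Writing $m_i,n_i$ for the numbers of points of $P$ and of varieties $E_q$ meeting the $i$-th cell, we have $m_i\ll m/D^{d_1}$, $\sum_i n_i\ll nD^{d_1-1}$ and $\#\text{cells}\ll D^{d_1}$; applying the inductive bound $I_{d_1,d_2}(m_i,n_i)\ll m_i^{a+\varepsilon}n_i^b+m_i+n_i$ cell by cell (the point sets are a constant factor smaller, the dimension unchanged) and summing with the power-mean inequality $\sum_i n_i^b\le(\sum_i n_i)^b(\#\text{cells})^{1-b}$, the in-cell incidences are
\[
\ll\Bigl(\tfrac{m}{D^{d_1}}\Bigr)^{a+\varepsilon}(nD^{d_1-1})^b(D^{d_1})^{1-b}+m+nD^{d_1-1}=C_0\,m^{a+\varepsilon}n^b\,D^{-d_1\varepsilon}+O_D(m+n),
\]
where the identity $d_1(1-a)=b$ has killed the positive power of $D$ and the constant $C_0$ does not depend on $D$. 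Choosing $D$ large makes the first term at most a small fraction of the target $m^{a+\varepsilon}n^b$; the remaining $O_D(m+n)$ is absorbed by the standard ``fix $D$ first, then choose the final constant'' bookkeeping.

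It remains to bound the incidences with $P_0:=P\cap Z(f)$, which lies on a variety of dimension $d_1-1$ and bounded degree. Those $q$ for which $E_q$ contains a component of $Z(f)$ number $O_u(1)$ per component (else $u$ such $q$ together with $u$ points of $P_0$ would form a $K_{u,u}$), contributing only $O(m+n)$; for all other $q$, $E_q\cap Z(f)$ is a variety of dimension $\le d_1-2$ inside the $(d_1-1)$-dimensional variety $Z(f)$, and fixing coordinates on $Z(f)$ turns this into an incidence problem handled by the inductive hypothesis at dimension $d_1-1$. \textbf{This step is the main obstacle.} The dimension-$(d_1-1)$ bound has exponents $a'=\frac{d_2(d_1-2)}{(d_1-1)d_2-1}<a$ and $b'=\frac{(d_1-1)(d_2-1)}{(d_1-1)d_2-1}>b$, so $m^{a'+\varepsilon}n^{b'}$ is \emph{not} pointwise below $m^{a+\varepsilon}n^b$ and a blind recursion leaves a narrow bad range of $m$ against $n$. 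Making the recursion respect the target exponents forces a genuinely careful induction: one must interleave the partitioning of $P$ with a dual partitioning of $Q$, recursing on $d_1+d_2$ and decreasing $\min(d_1,d_2)$ in a balanced way rather than $d_1$ alone, and it is the accumulation over the $O(d_1+d_2)$ recursion levels --- each optimised in its own degree parameter --- that produces the $m^{\varepsilon}$ loss. In the symmetric bottom case $d_1=d_2=2$ there is no further dimension to drop, the degree can be optimised exactly, and one recovers the clean $m^{2/3}n^{2/3}+m+n$ without any $\varepsilon$.
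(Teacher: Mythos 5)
The paper does not prove Theorem~\ref{fox}: it cites it from \cite{fox}, and the connection to the paper's machinery is that Theorem~\ref{graphs}, combined with Tidor--Yu's computation in \cite{hans} that $(|x|,|y|)$ is a distal regularity tuple for any semialgebraic $E$, recovers the ``otherwise'' case (via Remark~\ref{binaryepsilonremark}) and the $d_1=d_2=2$ case only up to an extra $m^\varepsilon$, a gap the paper explicitly acknowledges. Your proposal is instead a reconstruction of the original polynomial-partitioning argument of Fox--Pach--Sheffer--Suk--Zahl, so it is a genuinely different route worth contrasting. You work geometrically: a Guth--Katz degree-$D$ partition of $\R^{d_1}$, a power-mean sum over cells balanced by the identity $d_1(1-a)=b$, and a recursion into $Z(f)$ interleaved with a dual partition of $Q$ to absorb the mismatch $a'<a$, $b'>b$ between consecutive dimensions --- this interleaving over $O(d_1+d_2)$ levels is where the $m^\varepsilon$ loss accumulates, and the symmetric bottom case $d_1=d_2=2$ is where the degree can be optimised exactly to get clean $2/3$ exponents. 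The paper's route never sees a polynomial: it works only with the regularity partition into $O(\delta^{-c_i})$ pieces with homogeneous good cells, and the same exponent balance you obtain from cell-crossing counts reappears there as the fixed-point calculation $\gamma\mapsto\frac{1}{1+c_1(1-\gamma)}$ in Lemma~\ref{prelimbound}(ii). Your approach is tighter at $d_1=d_2=2$ but is wedded to semialgebraicity (polynomial ham-sandwich, Milnor--Thom/Barone--Basu cell-crossing bounds, real dimension of $Z(f)$); the paper's approach concedes an $\varepsilon$ there but applies to every relation with a distal regularity tuple, including $p$-adically definable relations and the non-definable example of Theorem~\ref{bays}, which is precisely the point of the paper.
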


The graph theorist naturally asks if these results can be generalised to $k$-uniform $k$-partite hypergraphs (henceforth, a \textit{$k$-graph} is a $k$-uniform hypergraph). Erd\H{o}s led the way in 1964 \cite{erdoshypergraphs}, generalising the result of K\H{o}v\'ari et al: a $K_{u,...,u}$-free $k$-partite $k$-graph with $n$ vertices in each class has $O_u(n^{k-1/u^{k-1}})$ edges. In 2018, Do \cite{do} generalised Theorem \ref{fox}, improving Erd\H{o}s' bounds for semialgebraic hypergraphs.

\begin{theorem}[{Do \cite[Theorem 1.7]{do}}]\label{do}
    Let $E(x_1, ..., x_k)$ be a semialgebraic relation on $\R$ with description complexity at most $t$. Let $d_i:=|x_i|$. Then, for all finite $P_i\subseteq \R^{x_i}$ with $n_i:=|P_i|$, if $E(P_1, ..., P_k)$ is $K_{u,...,u}$-free, then for all $\varepsilon>0$ we have
    \[|E(P_1, ..., P_k)|\ll_{u, \bar{d}, t, \varepsilon}F^\varepsilon_{\bar{d}}(\bar{n}),\]
    where $\bar{d}:=(d_1, ..., d_k)$ and $\bar{n}:=(n_1, ..., n_k)$.
\end{theorem}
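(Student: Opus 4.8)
The plan is to combine an induction on the number of parts $k$ with a self-improving recursion on the sizes of the point sets. The base case $k=1$ is immediate, since $K_u$-freeness of a finite subset of $\R^{d_1}$ forces at most $u-1$ vertices and hence at most $u-1$ edges; for $k=2$ one may instead invoke Theorem \ref{fox}. The engine is a \emph{regularity statement with polynomial savings} for semialgebraic relations of description complexity at most $t$: for a parameter $r$, one partitions each $P_i$ into roughly $r$ parts of size roughly $n_i/r$ so that all but an $O_{\bar d,t}(r^{-\gamma})$-fraction of the product cells $P_1^{j_1}\timesdots P_k^{j_k}$ are \emph{homogeneous} for $E$ — meaning $E$ holds either of every tuple in the cell or of none — for some $\gamma=\gamma(\bar d)>0$. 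For semialgebraic relations this comes from polynomial partitioning together with the bound on the number of cells of a degree-$D$ partition met by a variety of bounded degree; it is exactly this input that, to obtain the generalisation advertised in the abstract, one replaces by the distal regularity lemma, which supplies a partition of the same shape for any relation definable in a distal structure.

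Given such a partition, one estimates $|E(P_1,\dots,P_k)|$ by summing over the product cells. Homogeneous-empty cells contribute nothing. A homogeneous-full cell $P_1^{j_1}\timesdots P_k^{j_k}$ is a complete $k$-partite sub-configuration of $E(P_1,\dots,P_k)$, so $K_{u,\dots,u}$-freeness forces $|P_i^{j_i}|<u$ for at least one index $i$, and the cell contributes at most $u\prod_{l\ne i}|P_l^{j_l}|$; collapsing the sum over part-indices, the homogeneous-full cells together contribute $O(u\sum_i r\prod_{l\ne i}n_l)$, a power of $r$ times a product of all but one of the $n_l$. On each of the $O(r^{k-\gamma})$ non-homogeneous cells the restriction of $E$ is again semialgebraic of complexity at most $t$, still $K_{u,\dots,u}$-free, with point sets of size at most $n_l/r$, so the recursive bound (starting from the trivial estimate $\prod_l n_l$) applies with smaller data; since $F_{\bar d}$ is, up to constants, a sum of monomials in the $n_l$, the non-homogeneous cells contribute at most $O(r^{k-\gamma}F_{\bar d}(n_1/r,\dots,n_k/r))$, a power of $r$ times $F_{\bar d}(\bar n)$. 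Choosing $r$ — in fact a separate parameter per coordinate, to recover the sharp exponents — so as to balance the homogeneous-full term against the non-homogeneous one yields a strictly improved exponent vector, and iterating the balancing step a bounded, $\varepsilon$-dependent number of times drives the exponents to the fixed point of the resulting recursion; that fixed point is designed to equal $F_{\bar d}$, and the finitely many iterations, together with the accumulated powers of the partition parameters, are what get absorbed into the factor $n_i^\varepsilon$ in the statement.

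The combinatorial skeleton above is routine; the real work, and the main obstacle, is twofold. First, one must pin down the polynomial-saving exponent $\gamma(\bar d)$ provided by the regularity input and then verify that the fixed point of the (unbalanced) optimisation recursion is \emph{exactly} the function $F_{\bar d}$ — this is what dictates the correct recursive definition of $F_{\bar d}$, peeling off one coordinate at a time with the $k=2$ level reproducing the Fox et al.\ exponents $\tfrac{d_2(d_1-1)}{d_1d_2-1}$ and $\tfrac{d_1(d_2-1)}{d_1d_2-1}$, rather than some cruder balanced bound. Second, one must track the dichotomy in the statement — the clean exponents when all the $d_i$ are small against the $\varepsilon$-perturbed exponent otherwise — seeding it in the base case and checking it survives the recursion, and confine the $\varepsilon$-loss to a single coordinate by a careful choice of which point sets to subdivide at each stage; the additive $\sum_i n_i$-type terms carried inside $F_{\bar d}$, which account for under-populated cells and cells meeting the partitioning variety, are by comparison a minor bookkeeping matter.
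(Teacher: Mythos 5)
The first thing to flag: the paper does not prove Theorem~\ref{do}. It is imported verbatim from Do's paper \cite{do} and used as motivation; there is therefore no ``paper's own proof'' against which to compare. The nearest analogue in this paper is Theorem~\ref{mainabstractbound}, which gives the bound $F^\varepsilon_{\bar c}(\bar n)$ under a \emph{strong} distal regularity tuple $\bar c$. As the paper itself is careful to point out in Section~\ref{sectionintro}, combining Theorem~\ref{mainabstractbound} with Tidor--Yu's computation of a (non-strong) distal regularity tuple $(d_1,\dots,d_k)$ for semialgebraic $E$ only yields the bound with $\bar d$ replaced by $\bar d+\bar 1$ (via Lemma~\ref{strongweak}); recovering Theorem~\ref{do} exactly is left as an open question. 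So you should not present a proof of Theorem~\ref{do} as something the paper does.

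On the merits of your sketch: it is in the spirit of Do's polynomial-partitioning argument rather than of the paper's Theorem~\ref{mainabstractbound}, but there is a genuine structural gap. You announce a double induction on $k$ and on sizes, but the $k$-induction never actually enters the recursion step: your treatment of homogeneous-full cells is the crude count $O(u\sum_i r\prod_{l\ne i}n_l)$, and the arity never drops. In the paper's argument, the homogeneous (good) cells are where the arity reduction happens: fixing $u$ points $a_1,\dots,a_u$ in a part $A^1_{j_1}$, the union of homogeneous cells over $j_1$ sits inside $A^1_{j_1}\times\bigcap_e E(a_e,P_2,\dots,P_k)$, the common neighbourhood is a $K_{u,\dots,u}$-free $(k-1)$-ary relation that (by Lemma~\ref{tupleinduction}) inherits the tuple $(c_2,\dots,c_k)$, and the inductive bound $F^\varepsilon_{\bar c_{\ne 1}}(\bar n_{\ne 1})$ is applied to it. This is precisely what produces the $n_i F^\varepsilon_{\bar c_{\ne i}}(\bar n_{\ne i})$ summands in $F^\varepsilon_{\bar c}$. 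Without it, your good-cell term is too weak and the recursion $|E|\lesssim r^{k-\gamma}F(\bar n/r)+ur\sum_i\prod_{l\ne i}n_l$ does not have $F^\varepsilon_{\bar d}$ as a fixed point; you would need to explain where the intermediate terms $E_{\bar d_I}(\bar n_I)\prod_{i\notin I}n_i$ for $|I|<k$ come from. The bad-cell side is also oversimplified: the paper does not recurse cell-by-cell on a product partition but groups bad cells by their $(j_2,\dots,j_k)$-marginal into sets $B_{j_2,\dots,j_k}$ of total small size and then controls $\sum s_{j_2,\dots,j_k}^{\gamma_1+\varepsilon}$ by H\"older, which is what makes the exponent bookkeeping close up. Finally, the ``dichotomy in the statement'' you refer to at the end is a feature of the binary Theorem~\ref{fox} (the $d_1=d_2=2$ case), not of Theorem~\ref{do}, which has a single unified bound $F^\varepsilon_{\bar d}(\bar n)$; the only special case to handle there is $c_j=1$, which Definition~\ref{Fdefn} resolves by a limit convention, not by a case split in the argument.
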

The function $F^\varepsilon_{\bar{d}}$ will be defined in Definition \ref{Fdefn}, but for now we merely note that, when $d_1=\cdots=d_k=:d$ and $n_1=\cdots=n_k=:n$,
\[F^\varepsilon_{\bar{d}}(\bar{n})\ll_k n^{k-\frac{k}{(k-1)d+1}+k\varepsilon}.\]

In this paper, we prove an analogue of Theorem \ref{do} for a much larger class of relations, namely, relations satisfying the \textit{distal regularity lemma}. This is an improved version of the Szemer\'edi regularity lemma, in which the sizes of the partitions are polynomial in the reciprocal of the error, and the good cells are not just regular but homogeneous (that is, a clique or an anti-clique). Collecting the degrees of the polynomials into a \textit{strong distal regularity tuple} $\bar{c}$, we state our main theorem.

\begin{mainthm*}[Theorem \ref{mainabstractbound}]\label{mainthm}
    Let $E(x_1, ..., x_k)$ be a relation on a set $M$, with strong distal regularity tuple $\bar{c}=(c_1, ..., c_k)\in\R_{\geq 1}^k$ and coefficient $\lambda$. For all finite $P_i\subseteq M^{x_i}$ with $n_i:=|P_i|$, if $E(P_1, ..., P_k)$ is $K_{u, ..., u}$-free, then for all $\varepsilon>0$,
    \[|E(P_1, ..., P_k)|\ll_{u,\bar{c},\lambda,\varepsilon}F^\varepsilon_{\bar{c}}(n_1, ..., n_k).\]
\end{mainthm*}

Here, the function $F^\varepsilon_{\bar{c}}$ is precisely the function $F^\varepsilon_{\bar{d}}$ appearing in Theorem \ref{do}, but with $\bar{c}$ in place of $\bar{d}$ (as a tuple of dummy variables). The definition of the \textit{coefficient} is unimportant for this discussion, so we refer the reader to Definition \ref{tupledefn}.

The distal regularity lemma is so named because it is satisfied by all relations definable in \textit{distal} structures, which is a vast generalisation of o-minimal structures \cite{simondistal}. Examples of distal structures that are not o-minimal include the field of p-adic numbers \cite{simondistal}, Presburger arithmetic \cite{regularitylemma}, and expansions of Presburger arithmetic by predicates such as $\{2^n: n\in \N\}$, $\{n!: n\in \N\}$, and the set of Fibonacci numbers \cite{mypaper}.

Thus, our main theorem joins a parade of combinatorial properties that have been shown to hold in distal structures in the last decade. It supports the postulate by Chernikov, Galvin, and Starchenko in \cite{cuttinglemma} that `distal structures provide the most general natural setting for investigating questions in ``generalised incidence combinatorics''', where they proved an analogue of Theorem \ref{fox} for binary relations definable in a distal structure. It also motivates the following problem.

\begin{problem}
    Compute (strong) distal regularity tuples for relations satisfying the distal regularity lemma, such as those definable in a distal structure.
\end{problem}

It is worth pointing out that our main theorem applies to all relations satisfying the distal regularity lemma, which form a strictly larger class of relations than those definable in a distal structure --- see Theorem \ref{bays}. However, this does not constitute a refutation of the postulate by Chernikov, Galvin, and Starchenko, as distality remains the most general first-order property in the literature which guarantees that all relations in the structure satisfy the distal regularity lemma. See Section \ref{sectiondistality} for a discussion on distality.

This paper presents regularity lemmas as a means of obtaining Zarankiewicz bounds, an approach also adopted in \cite{janzerpohoata}. Improvements on the Szemer\'edi regularity lemma have been made in various contexts, such as for stable graphs \cite{malliarisshelah} and for graphs with bounded VC-dimension \cite{nipregularitylemma}. Following our main theorem, it is natural to ask the following question.
\begin{question}
    Which other variants of the Szemer\'edi regularity lemma give rise to improved Zarankiewicz bounds?
\end{question}
\subsection{The semialgebraic case}
Recently, Tidor and Yu \cite{hans} proved that if $E(x_1, ..., x_k)$ is a semialgebraic relation on $\mathbb{R}$, then $(|x_1|, ..., |x_k|)$ is a distal regularity tuple for $E$, and so $(|x_1|+1, ..., |x_k|+1)$ is a strong distal regularity tuple for $E$, where the coefficient is a function of the description complexity of $E$. We refer the reader to Definition \ref{tupledefn} for a precise definition of (strong) distal regularity tuples, but for now we emphasise that the word `strong' refers to requiring equipartitions in the distal regularity lemma.

Thus, if the assumption in our main theorem can be weakened so that $\bar{c}$ is only required to be a distal regularity tuple (that is, the corresponding partitions need not be equipartitions), their result can be combined with ours to recover Theorems \ref{fox} and \ref{do}. In Section \ref{sectionbinary}, we show that this assumption can indeed be so weakened when $E$ is a binary relation, thus recovering Theorem \ref{fox}. We would like to do likewise for an arbitrary relation.

\begin{question}
    For an arbitrary relation $E(x_1, ..., x_k)$, can the assumption in our main theorem be weakened so that $\bar{c}$ is only required to be a distal regularity tuple?
\end{question}

Another way to recover Theorem \ref{do} would be to answer the following question positively.
\begin{question}
    For a semialgebraic relation $E(x_1, ..., x_k)$ on $\mathbb{R}$, is $(|x_1|, ..., |x_k|)$ a strong distal regularity tuple for $E$?
\end{question}
We note however that, in \cite{hans}, Tidor and Yu also proved infinitesimally improved versions of Theorems \ref{fox} and \ref{do}, which our present methods are not able to achieve.
\subsection{Structure of the paper}
In Section \ref{sectionregularitytuples}, we introduce the notion of (strong) distal regularity tuples and prove some of their basic properties. In Section \ref{sectionbinary}, we prove a stronger version of the main theorem in the case where the relation is binary, and in Section \ref{sectionmain}, we prove the theorem in full. Finally, in Section \ref{sectiondistality}, we apply the theorem to distal structures, providing a brief commentary on distality for the uninitiated.
\subsection{Notation and basic definitions}
In this paper, all logical structures are first-order. We will often use $\mathcal{M}$ to denote a structure and $M$ to denote its underlying set.
\subsubsection{Indexing}\label{subsubsecindexing}
For $k\in\N^+$, $[k]:=\{1, ..., k\}$.

Let $\bar{c}=(c_1, ..., c_k)$ be a $k$-tuple. For $I\subseteq [k]$ enumerated in increasing order by $i_1, ..., i_l$, let $\bar{c}_I$ denote the $l$-tuple $(c_{i_1}, ..., c_{i_l})$. For $i\in [k]$, $\bar{c}_{\neq i}:=\bar{c}_{[k]\setminus\{i\}}$.

\subsubsection{Relations}

Let $M$ be a set and $k\in\N^+$. For tuples of variables $x_1, ..., x_k$, a \textit{relation} $E(x_1, ..., x_k)$ on $M$ is a subset of $M^{|x_1|}\timesdots M^{|x_k|}$. We will often drop the absolute value signs and write $M^{x_1}$ for $M^{|x_1|}$, and so on.

For $a_i\in M^{x_i}$, $E(a_1, ..., a_k)$ is defined to mean $(a_1, ..., a_k)\in E(x_1, ..., x_k)$. For $P_i\subseteq M^{x_i}$, $E(P_1, ..., P_k)$ denotes the induced $k$-subgraph of $E(x_1, ..., x_k)$ on $P_1\timesdots P_k$; that is,
\[E(P_1, ..., P_k)=\{(a_1, ..., a_k)\in P_1\timesdots P_k: E(a_1, ..., a_k)\}\subseteq P_1\timesdots P_k.\]
Say that $P_1\timesdots P_k$ is \textit{$E$-homogeneous} if $E(P_1, ..., P_k)=P_1\timesdots P_k$ or $\emptyset$.

For $P_i\subseteq M^{x_i}$ and $b_i\in M^{x_i}$, define $E(b_1, P_2, ..., P_k)$ to be the $(k-1)$-graph
\[\{(a_2, ..., a_k)\in P_2\timesdots P_k: E(b_1, a_2, ..., a_k)\}\]
on $P_2\timesdots P_k$, and we similarly define $E(P_1, ..., P_{i-1}, b_i, P_{i+1}, ..., P_k)$ for all $i\in [k]$.
\subsubsection{Covers and partitions}

Given a set $X$, a collection $(X_1, ..., X_k)$ of subsets of $X$ is said to \textit{cover} $X$ if $X=X_1\cup\cdots\cup X_k$. If, additionally, $X_1, ..., X_k$ are pairwise disjoint, we say that they \textit{partition} $X$; we will often write this partition as $X=X_1\sqcup\cdots\sqcup X_k$. A cover $X=X_1\cup\cdots\cup X_k$ \textit{refines} a cover $X=Y_1\cup\cdots\cup Y_l$ if, for all $i\in [k]$, there is $j\in [l]$ such that $X_i\subseteq Y_j$.

A partition $X_1\sqcup\cdots\sqcup X_k$ of a finite set $X$ is said to be an \textit{equipartition} if, for all $i,j\in [k]$, $||X_i|-|X_j||\leq 1$.

\subsubsection{Asymptotics}
Let $D,E$ be sets and $f(x,y),g(x,y): D\times E\to\R_{\geq 0}$, where $x,y$ are tuples of variables. Write $f(x,y)=O_x(g(x,y))$, $f(x,y)\ll_x g(x,y)$, or $g(x,y)\gg_x f(x,y)$ if there is $C=C(x): D\to \R_{\geq 0}$ such that $f(x,y)\leq C g(x,y)$ for all $x\in D$ and $y\in E$.

Let $h(y): E\to\R_{\geq 0}$. Write $f(x,y)\leq \text{poly}_x(h(y))$ if there is $C=C(x): D\to\R_{\geq 0}$ such that $f(x,y)\leq Ch(y)^C$ for all $x\in D$ and $y\in E$.

\subsection{Acknowledgements}
We thank Pantelis Eleftheriou for his consistent guidance and mentorship, and for suggesting this problem to us. We thank Martin Bays, Dugald Macpherson, and Vincenzo Mantova for their insightful comments and suggestions on the version of this paper contained in our PhD thesis, with special thanks to Martin for our discussions on Theorem \ref{bays}. We thank the referee for taking the time to review this paper and offer helpful suggestions.

\textit{Soli Deo gloria.}

\subsection{Funding statement}
The majority of the research in this paper was conducted as part of the author’s PhD, supported by a scholarship funded by the School of Mathematics at the University of Leeds. During the final stages of the revision process, the author was employed at the University of Cambridge and supported by Julia Wolf's Open Fellowship from the UK Engineering and Physical Sciences Research Council (EP/Z53352X/1).

\section{(Strong) distal regularity tuples}\label{sectionregularitytuples}
We begin by defining the notion of regularity for a bipartite graph.
\begin{defn}
    Let $M$ be a set, and let $E(x,y)$ be a relation on $M$. For finite $A\subseteq M^x$ and $B\subseteq M^y$, write
    \[d(A,B):=\frac{|E(A,B)|}{|A||B|}.\]
    
    Let $P\subseteq M^x$ and $Q\subseteq M^y$ be finite. For $\delta>0$, say that the bipartite graph $E(P,Q)$ is \textit{$\delta$-regular} if, for all $A\subseteq P$ and $B\subseteq Q$ with $|A|\geq \delta |P|$ and $|B|\geq \delta |Q|$, $|d(A,B)-d(P,Q)|\leq \delta$.
\end{defn}
In 1978, Szemer\'edi proved the following celebrated regularity lemma.
\begin{theorem}[Szemer\'edi, 1978 \cite{szemerediregularitylemma}]\label{szemeredi}
    Let $M$ be a set, and let $E(x,y)$ be a relation on $M$. For all $\delta>0$, there is $K\in\N$ such that the following holds.

    Let $P\subseteq M^x$ and $Q\subseteq M^y$ be finite. Then there are partitions $P=A_1\sqcup \cdots \sqcup A_K$ and $Q=B_1\sqcup \cdots\sqcup B_K$, and an index set $\Sigma\subseteq [K]^2$ of `bad cells', such that
    \begin{enumerate}[(a)]
        \item `Meagre bad cells': $\sum_{(i,j)\in\Sigma} |A_i\times B_j|\leq \delta |P\times Q|$; and
        \item `$\delta$-regular good cells': for all $(i,j)\in [K]^2\setminus \Sigma$, $E(A_i, B_j)$ is $\delta$-regular.
    \end{enumerate}
\end{theorem}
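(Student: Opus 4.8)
The plan is to use the \emph{energy increment} method. Given finite partitions $P = A_1 \sqcup \cdots \sqcup A_s$ and $Q = B_1 \sqcup \cdots \sqcup B_t$, define the \emph{index} (mean-square density)
\[
q(\{A_i\}, \{B_j\}) := \sum_{i=1}^{s}\sum_{j=1}^{t} \frac{|A_i|\,|B_j|}{|P|\,|Q|}\, d(A_i, B_j)^2 \in [0,1].
\]
First I would record the monotonicity fact: if $\{A_i'\}$ refines $\{A_i\}$ and $\{B_j'\}$ refines $\{B_j\}$, then $q(\{A_i'\}, \{B_j'\}) \geq q(\{A_i\}, \{B_j\})$. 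This is a convexity computation: within a fixed cell $A_i \times B_j$ the sub-densities average (weighted by relative size) to $d(A_i, B_j)$, and $x \mapsto x^2$ is convex, so the contribution of that cell can only increase; more precisely, the increase equals the variance of the sub-densities about the parent density $d(A_i,B_j)$.

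Next, the quantitative core: a \emph{defect Cauchy--Schwarz} estimate. Suppose $E(A, B)$ fails to be $\delta$-regular, witnessed by $A' \subseteq A$, $B' \subseteq B$ with $|A'| \geq \delta|A|$, $|B'| \geq \delta|B|$ and $|d(A', B') - d(A, B)| > \delta$. Then refining via $A = A' \sqcup (A \setminus A')$ and $B = B' \sqcup (B \setminus B')$ raises the contribution of this cell to the index by at least $\delta^4 \frac{|A|\,|B|}{|P|\,|Q|}$: by the previous paragraph the gain is a weighted sum of squared deviations of the four sub-densities from $d(A,B)$, and this is bounded below by the single term $\frac{|A'|\,|B'|}{|A|\,|B|}\bigl(d(A',B') - d(A,B)\bigr)^2 \geq \delta^2\cdot\delta^2$ coming from $A' \times B'$.

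Then I would iterate. Start from the trivial partition $s = t = 1$. At each stage, let $\Sigma$ be the set of cells that are not $\delta$-regular. If conclusion (a) holds for this $\Sigma$, stop; conclusion (b) is then automatic. Otherwise $\sum_{(i,j)\in\Sigma}|A_i\times B_j| > \delta|P\times Q|$, so simultaneously refining every bad cell by its witnessing sets (and leaving good cells untouched) yields new partitions whose index has grown by at least $\delta\cdot\delta^4 = \delta^5$, by summing the per-cell gains above. Since the index stays in $[0,1]$, this can happen at most $\delta^{-5}$ times, so the process halts; the resulting partitions satisfy (a) and (b). To produce the uniform $K$, note each refinement step cuts every part of a side into at most $2^{(\text{number of parts on the other side})}$ pieces, so after at most $\lceil\delta^{-5}\rceil$ steps the number of parts is bounded by an explicit (tower-type) function of $\delta$ alone; take $K$ to be that bound, padding both partitions with empty parts so they have exactly $K$ parts each.

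The main obstacle is the defect Cauchy--Schwarz estimate: obtaining the \emph{honest} $\delta^4$ lower bound on the index increment of a single irregular cell, rather than merely observing that some increment occurs. One must check that splitting into the two pieces $A', A\setminus A'$ (resp. $B', B\setminus B'$) — as opposed to a finer partition — already captures enough of the variance; the cleanest route is the identity expressing the index gain of any refinement as a weighted sum of squared deviations of sub-densities from the parent density, then lower-bounding it by the single $A'\times B'$ term. The remaining bookkeeping — summing cell gains, and keeping the number of parts bounded independently of $P,Q$ across all $\delta^{-5}$ steps — is routine but is exactly what legitimises extracting the uniform constant $K$.
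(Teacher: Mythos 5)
The paper does not prove Theorem \ref{szemeredi}; it cites Szemer\'edi's 1978 paper and uses the result as a black box, so there is no in-paper argument to compare yours against. On its own merits, your proof is correct and is the standard energy-increment argument: monotonicity of the index under refinement follows from convexity; a single irregular pair, witnessed by $A'\subseteq A$ and $B'\subseteq B$, contributes a variance gain of at least $\frac{|A'|\,|B'|}{|A|\,|B|}\bigl(d(A',B')-d(A,B)\bigr)^2 > \delta^4$ times the cell's relative measure; and since you are in the case where the bad cells have total measure exceeding $\delta$, the total gain exceeds $\delta^5$. As the index lies in $[0,1]$, the iteration halts within $\delta^{-5}$ steps, and the resulting tower-type bound on the number of parts depends only on $\delta$. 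Two small points worth tightening. First, when you ``simultaneously refine every bad cell by its witnessing sets'' you are implicitly taking, on each side, the common refinement of all the relevant two-part splits; you note the resulting bound on the number of parts, but it is worth stating explicitly that the per-cell energy gain of this common refinement dominates the per-cell gain of each individual $2\times 2$ split, by another application of monotonicity restricted to that cell. Second, after padding with empty parts so that both partitions have exactly $K$ parts, any pair $(i,j)$ with $A_i$ or $B_j$ empty should be absorbed into $\Sigma$: such pairs have zero mass, so (a) is unaffected, whereas $\delta$-regularity as defined involves dividing by $|A_i|\,|B_j|$.
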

Szemer\'edi's proof shows that $K$ can be bounded above by an exponential tower with height a polynomial in $1/\delta$. Hopes of improving this enormous bound in general were quashed in 1997 when Gowers \cite{gowers1997} constructed graphs necessitating $K$ of this size.

However, various results have arisen since then that establish better bounds for $K$ in certain contexts, along with additional improvements on the regularity partition. Notably, in 2016, Fox, Pach, and Suk \cite{semialgebraicregularity} showed that when $E$ is semialgebraic, not only is $K$ upper bounded by a polynomial in $1/\delta$, but also (b) in Theorem \ref{szemeredi} can be replaced by the condition that for all $(i,j)\in [K]^2\setminus \Sigma$, $A_i\times B_j$ is $E$-homogeneous, a very strong form of regularity. In 2018, Chernikov and Starchenko \cite{regularitylemma} weakened the semialgebraicity assumption and showed that this holds if $E$ is definable in a distal structure, leading to the nomenclature \textit{distal regularity lemma}.

The results of Fox--Pach--Suk and Chernikov--Starchenko hold for relations of arbitrary arity (that is, for hypergraphs as well as graphs). We will state this result formally in an a priori roundabout way, by putting the spotlight on the degree of the polynomial in $1/\delta$ that upper bounds $K$ --- this will be important later on.

\begin{defn}\label{tupledefn}
    Let $M$ be a set, and let $E(x_1, ..., x_k)$ be a relation on $M$. Let $c_1, ..., c_k\in \R_{\geq 0}$, and write $\bar{c}:=(c_1, ..., c_k)$.
        
        Say that $\bar{c}$ is a \textit{distal regularity tuple (respectively, strong distal regularity tuple)} for $E$ if there is a \textit{coefficient} $\lambda>1$ satisfying the following: for all $\delta>0$ and finite sets $P_i\subseteq M^{x_i}$ with $n_i:=|P_i|$, there are partitions (respectively, equipartitions) $P_i=A^i_1\sqcup \cdots \sqcup A^i_{K_i}$ and an index set $\Sigma\subseteq [K_1]\times \cdots \times [K_k]$ of `bad cells' such that
    \begin{enumerate}[(a)]
        \item `Meagre bad cells': $\sum_{(j_1, ..., j_k)\in \Sigma}|A^1_{j_1}\times\cdots\times A^k_{j_k}|\leq \lambda\delta n_1\cdots n_k$;
        \item `Homogeneous good cells': for all $(j_1, ..., j_k)\in [K_1]\times \cdots\times [K_k]\setminus \Sigma$, $A^1_{j_1}\times\cdots\times A^k_{j_k}$ is $E$-homogeneous; and
        \item `Polynomially (in $\delta^{-1}$) many cells' For all $i\in [k]$, $K_i\leq \lambda\delta^{-c_i}$.
    \end{enumerate}
    
    Say that $E$ \textit{satisfies the distal regularity lemma} if there is a distal regularity tuple for $E$.
\end{defn}

It is immediate that strong distal regularity tuples are distal regularity tuples (with the same coefficient), and the following lemma establishes a converse.
\begin{lemma}\label{strongweak}
    Let $M$ be a set, and let $E(x_1, ..., x_k)$ be a relation on $M$. Let $c_1, ..., c_k\in \R_{\geq 0}$. Writing $\bar{c}:=(c_1, ..., c_k)$, suppose $\bar{c}$ is a distal regularity tuple for $E$ with coefficient $\lambda$. Then $(c_1+1, ..., c_k+1)$ is a strong distal regularity tuple for $E$ with coefficient $(k+2)\lambda$.
\end{lemma}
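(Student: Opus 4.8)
The plan is to take the (non-equi)partitions $P_i = A^i_1 \sqcup \cdots \sqcup A^i_{K_i}$ guaranteed by the distal regularity tuple $\bar c$ applied at some error parameter $\delta'$ to be chosen, and refine each of them into an equipartition by chopping every block $A^i_j$ into pieces of a common size $s_i \approx \delta' n_i$ (with one leftover piece per block of size less than $s_i$). First I would fix the target error $\delta$ and set $\delta' := \delta / c$ for a suitable constant $c = c(k)$; apply the distal regularity lemma for $\bar c$ with parameter $\delta'$ to obtain partitions with $K_i \le \lambda (\delta')^{-c_i}$ blocks and bad set $\Sigma$ with relative measure at most $\lambda \delta'$. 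Then, in each class $i$, pick $s_i := \lceil \delta' n_i \rceil$ (or similar), cut each $A^i_j$ into $\lfloor |A^i_j| / s_i \rfloor$ full pieces of size $s_i$ plus a remainder piece; collect all the remainder pieces of class $i$ into... no — better: collect all remainders across the class into a single "garbage" set $G_i$ and redistribute its elements one-by-one into the existing full pieces, or simply declare every cell touching a remainder piece to be bad. The cleanest bookkeeping is: let $\widetilde A^i_1, \ldots, \widetilde A^i_{\widetilde K_i}$ be the refined equipartition (all pieces of size $s_i$ or $s_i - 1$, achievable by standard rebalancing), and define the new bad set $\widetilde\Sigma$ to consist of all refined cells $\widetilde A^1_{l_1} \times \cdots \times \widetilde A^k_{l_k}$ such that the corresponding original cell $A^1_{j_1} \times \cdots \times A^k_{j_k}$ (the one containing it) lies in $\Sigma$, together with all refined cells containing any element that had to be moved during rebalancing.

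The three verifications are then routine. For (b), homogeneity is inherited: a refined good cell sits inside an original good cell, and a subset of an $E$-homogeneous box is $E$-homogeneous, provided we were careful that rebalancing only moved elements lying in cells we already declared bad — this is why the moved elements must be absorbed into $\widetilde\Sigma$. For (c), the number of refined blocks in class $i$ is at most $K_i \cdot \lceil n_i / s_i \rceil + O(1) \le \lambda (\delta')^{-c_i} \cdot (1/\delta' + 1) + \ldots \le (\text{const}) \cdot \lambda \delta^{-(c_i+1)}$ after substituting $\delta' = \delta/c$; tracking constants gives the claimed coefficient $(k+2)\lambda$ with the stated bookkeeping (the "$+2$" absorbing the refinement blow-up and the garbage/rebalancing overhead, the "$k$" coming from a union bound over the $k$ classes when estimating the new bad measure). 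For (a), the bad measure splits as: the mass of cells inherited from $\Sigma$, which is $\le \lambda \delta' n_1 \cdots n_k$; plus, for each class $i$, the mass of cells meeting the rebalancing-garbage of class $i$, which is bounded by $(\text{size of garbage in class } i) \cdot \prod_{i' \ne i} n_{i'} \le \delta' n_i \cdot \prod_{i' \ne i} n_{i'}$ since at most one near-empty remainder per original block, times $K_i$ blocks, times $s_i \approx \delta' n_i$... I need to be careful here — the garbage per class is at most $K_i \cdot s_i \le \lambda (\delta')^{-c_i} \cdot \delta' n_i$, which is \emph{not} $O(\delta' n_i)$ unless $c_i \le 1$.

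This exposes the real obstacle: naive chopping into pieces of size $\approx \delta' n_i$ creates up to one small remainder per block, and with polynomially-many blocks the total garbage can swamp the bad budget. The fix — and the step I expect to require the most care — is to choose the common piece size $s_i$ \emph{much larger} than the remainders are individually small relative to the whole, i.e. run the argument so that the equipartition pieces each have size on the order of $n_i / (\lambda (\delta')^{-c_i})$ — roughly $|A^i_j|$ scale — rather than $\delta' n_i$ scale; more precisely, choose $s_i$ so that $\widetilde K_i$ (the number of refined pieces) is $\Theta(\lambda \delta^{-(c_i+1)})$, which forces $s_i \approx n_i \delta^{c_i+1}/\lambda$, and then the total remainder mass in class $i$ is at most $K_i \cdot s_i \approx \lambda (\delta')^{-c_i} \cdot n_i \delta^{c_i+1}/\lambda \approx n_i \delta$ — now it fits. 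Equivalently: refine until the pieces are small enough that you have the right count, accept that this over-refines the large original blocks, and note that the \emph{remainders} (hence garbage) scale with the small piece size, not the original block size. With $s_i$ chosen this way and $\delta' = \delta/(k+2)$ (say), a union bound over the $k$ classes plus the inherited $\Sigma$-mass gives total bad measure $\le (k+2) \cdot \lambda (\delta/(k+2)) \cdot n_1 \cdots n_k \cdot$ (harmless factor), and the count bound gives $\widetilde K_i \le (k+2)\lambda \, \delta^{-(c_i+1)}$, completing all three conditions with coefficient $(k+2)\lambda$. The rebalancing-of-remainders-into-full-pieces step (to upgrade "pieces of size $s_i$ or smaller" to a genuine equipartition with sizes differing by $\le 1$) is then a finite combinatorial adjustment whose moved elements are few enough to absorb into $\widetilde\Sigma$ without affecting the leading-order estimates.
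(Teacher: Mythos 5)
Your proposal, after you catch and correct the false start about piece size, is essentially the paper's own proof: chop each original block into pieces of common size $\approx \delta^{c_i+1} n_i$ (the paper uses $N_i := \lceil \tfrac{1}{2}\delta^{c_i+1} n_i\rceil$), so that the per-class garbage is at most $K_i \cdot N_i \leq \lambda\delta n_i$, then rebalance into an equipartition and declare bad all cells touching a remainder piece or inheriting a $\Sigma$-cell. The three verifications and the resulting bookkeeping ($L_i \leq K_i + K_i' \leq (\lambda+2)\delta^{-(c_i+1)}$ and total bad measure $\leq (k+1)\lambda\delta\, n_1\cdots n_k$) match the paper's.
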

\begin{proof}
    Suppose $\bar{c}$ is a distal regularity tuple for $E$ with coefficient $\lambda$. Let $\delta\in (0,\lambda^{-1})$ and let $P_i\subseteq M^{x_i}$ be finite with $n_i:=|P_i|$. Fix partitions $P_i=A^i_1\sqcup \cdots \sqcup A^i_{K_i}$ and an index set of `bad cells' $\Sigma\subseteq [K_1]\times \cdots \times [K_k]$ as in the definition of $\bar{c}$ as a distal regularity tuple for $E$.
    
    For each $i\in [k]$, define an equipartition $P_i=B^i_1\sqcup\cdots\sqcup B^i_{L_i}$ as follows. Let $N_i:=\lceil\frac{1}{2}\delta^{c_i+1}n_i\rceil$. For each $j\in [K_i]$, partition $A^i_j$ into a maximal number of parts of size $N_i$ and a part $S^i_j$ of size less than $N_i$. This gives a new partition $P_i=S^i_1\sqcup\cdots\sqcup S^i_{K_i}\sqcup T^i_1\sqcup \cdots\sqcup T^i_{K'_i}$ where, for all $j'\in [K'_i]$, $|T^i_{j'}|=N_i$ and there is a unique $j\in [K_i]$ such that $T^i_{j'}\subseteq A^i_j$. Observe that $K'_i\leq n_i/N_i\leq 2\delta^{-(c_i+1)}$.
    
    By moving elements of $T^i_1, ..., T^i_{K'_i}$ to $S^i_1, ..., S^i_{K_i}$ as much as necessary, we obtain an equipartition $P_i=\bar{S}^i_1\sqcup\cdots\sqcup \bar{S}^i_{K_i}\sqcup \bar{T}^i_1\sqcup \cdots\sqcup \bar{T}^i_{K'_i}$ where each part has size at most $N_i$ and, for all $j'\in [K'_i]$, there is a unique $j\in [K_i]$ such that $\bar{T}^i_{j'}\subseteq A^i_j$. Rename $\bar{S}^i_1, ..., \bar{S}^i_{K_i}$ as $B^i_1, ..., B^i_{K_i}$ and $\bar{T}^i_1, ..., \bar{T}^i_{K'_i}$ as $B^i_{K_i+1}, ..., B^i_{L_i}$. Observe that $L_i=K_i+K'_i\leq (\lambda+2)\delta^{-(c_i+1)}$, and
    \[\sum_{j=1}^{K_i}|B^i_j|\leq \begin{cases}
        0&\text{if }N_i=1\\
        K_iN_i&\text{if }N_i>1
    \end{cases}\leq \lambda\delta n_i.\]
    
    For all $(j_1, ..., j_k)\in ([L_1]\setminus [K_1])\times\cdots\times([L_k]\setminus [K_k])$, there is a unique $(i_1, ..., i_k)\in [K_1]\timesdots [K_k]$ such that $B^1_{j_1}\timesdots B^k_{j_k}\subseteq A^1_{i_1}\timesdots A^k_{i_k}$; write $(i_1, ..., i_k)=\pi(j_1, ..., j_k)$. Now set
    \[\Lambda:=\bigcup_{i=1}^k [L_1]\times \cdots \times [L_{i-1}]\times [K_i]\times [L_{i+1}]\times\cdots \times [L_k]\cup\Lambda_0,\]
    where $\Lambda_0:=\{(j_1, ..., j_k)\in ([L_1]\setminus [K_1])\times\cdots\times([L_k]\setminus [K_k]): \pi(j_1, ..., j_k)\in\Sigma\}$. We claim that the partitions $P_i=B^i_1\sqcup\cdots \sqcup B^i_{L_i}$, together with the index set $\Lambda$ of bad cells, witness that $(c_1+1, ..., c_k+1)$ is a strong distal regularity tuple for $E$ with coefficient $(k+2)\lambda$.
    
    We already have that, for all $i\in [k]$, $L_i\leq (\lambda+2)\delta^{-(c_i+1)}\leq (k+2)\lambda\delta^{-(c_i+1)}$. Furthermore, for all $(j_1, ..., j_k)\in [L_1]\times \cdots \times [L_k]\setminus \Lambda\subseteq ([L_1]\setminus [K_1])\times\cdots\times([L_k]\setminus [K_k])$, we have $\pi(j_1, ..., j_k)\not\in\Sigma$; that is, $B^1_{j_1}\times\cdots\times B^k_{j_k}$ is contained in an $E$-homogeneous cell, so is itself $E$-homogeneous. Therefore, it remains to show that $\sum_{(j_1, ..., j_k)\in \Lambda}|B^1_{j_1}\times\cdots\times B^k_{j_k}|\leq (k+2)\lambda\delta n_1\cdots n_k$.
    
    Firstly, $\bigsqcup_{(j_1, ..., j_k)\in \Lambda_0}B^1_{j_1}\times\cdots\times B^k_{j_k}\subseteq \bigsqcup_{(i_1, ..., i_k)\in \Sigma}A^1_{i_1}\times\cdots\times A^k_{i_k}$, so the set on the left has size at most $\lambda\delta n_1\cdots n_k$. Now
    \begin{align*}
        \sum_{(j_1, ..., j_k)\in \Lambda}|B^1_{j_1}\times\cdots\times B^k_{j_k}|&= \sum_{i=1}^k n_1\cdots n_{i-1}n_{i+1}\cdots n_k \sum_{j=1}^{K_i}|B^i_j|+\sum_{(j_1, ..., j_k)\in \Lambda_0}|B^1_{j_1}\times\cdots\times B^k_{j_k}|\\
        &\leq \sum_{i=1}^k n_1\cdots n_{i-1}n_{i+1}\cdots n_k (\lambda \delta n_i)+\lambda\delta n_1\cdots n_k\\
        &=(k+1)\lambda\delta n_1\cdots n_k.\qedhere
    \end{align*}
\end{proof}
The results of Fox--Pach--Suk and Chernikov--Starchenko can now be stated as follows. We use $\R$ to denote the structure of the real ordered field.
\begin{theorem}[$\mathcal{M}=\R$: Fox--Pach--Suk, 2016 \cite{semialgebraicregularity}; $\mathcal{M}$ distal: Chernikov--Starchenko, 2018 \cite{regularitylemma}]\label{chernikovstarchenko}
    Let $\phi(x_1, ..., x_k; y)$ be a relation definable in a distal structure $\mathcal{M}$. Then there are $\bar{c}\in \R_{\geq 0}^k$ and $\lambda>1$ such that, for all $b\in M^y$, the relation $E(x_1, ..., x_k):=\phi(x_1, ..., x_k; b)$ on $M$ has distal regularity tuple $\bar{c}$ with coefficient $\lambda$.
    
    By Lemma \ref{strongweak}, `distal regularity tuple' can be replaced with `strong distal regularity tuple' in the statement above.
\end{theorem}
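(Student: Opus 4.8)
The plan is to derive this statement directly from the distal regularity lemma as established by Fox--Pach--Suk in \cite{semialgebraicregularity} (for $\mathcal{M}=\R$ with $\phi$ semialgebraic) and by Chernikov--Starchenko in \cite{regularitylemma} (for an arbitrary distal $\mathcal{M}$); the content of the proof is simply to check that their conclusion is, verbatim, the list of conditions (a)--(c) of Definition \ref{tupledefn}, with degrees $c_i$ and coefficient $\lambda$ depending on the formula $\phi$ alone and not on the instance $b$. First I would recall that the mechanism underlying both theorems is a \emph{(distal) cell decomposition} for $\phi(x_1,\dots,x_k;y)$: a finite family of auxiliary formulas, fixed in advance, such that for every finite $B\subseteq M^y$ the space $M^{x_i}$ decomposes into $O_\phi(|B|^{c_i})$ cells, each of which is uncut by every instance $\phi(\,\cdot\,;b)$ with $b\in B$. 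Isolating this is the key point: the exponent $c_i$ and the implied constant are attached to that fixed family of formulas, hence uniform in $B$, and in particular uniform in $b$.

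Next I would apply the regularity lemma of \cite{regularitylemma} (respectively \cite{semialgebraicregularity}) to the relation $\phi(x_1,\dots,x_k;b)$, with input sets $P_1,\dots,P_k$ and error parameter $\delta$. Its output is a system of partitions $P_i=A^i_1\sqcup\cdots\sqcup A^i_{K_i}$ with $K_i\le C_i\,\delta^{-c_i}$, together with a bad-cell set $\Sigma$ whose union has size at most $C_0\,\delta\,n_1\cdots n_k$ and such that every cell outside $\Sigma$ is $\phi(\,\cdot\,;b)$-homogeneous, i.e.\ $E$-homogeneous --- which is exactly (a), (b) and (c). It then remains to collapse constants: put $\lambda:=\max\{2,C_0,C_1,\dots,C_k\}$, so that $\lambda>1$ and all three bounds hold with $\lambda$ in the place of the $C_j$. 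Because $\lambda$, the $c_i$ and $C_0$ are read off from data associated with $\phi$ alone, the same $\bar{c}$ and $\lambda$ witness the conclusion for every $b\in M^y$ simultaneously. The final assertion of the theorem is then immediate from Lemma \ref{strongweak}.

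The main obstacle I anticipate is purely bookkeeping at the interface with the literature. The results of \cite{semialgebraicregularity,regularitylemma} are usually phrased for one fixed tuple of finite sets and a single error parameter, so I would need to verify two points: that the genuinely $k$-partite form (partitioning every $P_i$, not just two sides) is available --- it is, arising by iterating the cell decomposition over the coordinates $x_1,\dots,x_k$ --- and that the polynomial bound on the number of parts has exponent independent of the parameter $b$, which is precisely the uniformity-in-parameters property of distal cell decompositions recalled above. Beyond this translation, I do not expect to need any argument not already present in \cite{semialgebraicregularity,regularitylemma}.
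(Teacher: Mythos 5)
Your proposal is correct and matches the paper's approach exactly: the paper states Theorem \ref{chernikovstarchenko} as a direct restatement of the cited results of Fox--Pach--Suk and Chernikov--Starchenko, with no additional proof given, and your account of the translation (reading off $\bar{c}$ and the implied constants from the distal cell decomposition for $\phi$, noting the uniformity in the parameter $b$, and absorbing the constants into a single coefficient $\lambda>1$) is precisely the bookkeeping that underlies that restatement. The final sentence about strong distal regularity tuples is, as you say, immediate from Lemma \ref{strongweak}.
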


We will now forget about the context of $\mathcal{M}=\R$ or $\mathcal{M}$ as a distal structure (until Section \ref{sectiondistality}), and derive Zarankiewicz bounds for all relations with a (strong) distal regularity tuple, that is, all relations that satisfy the distal regularity lemma.

We finish this section by proving a preliminary Zarankiewicz bound for a relation $E(x_1, ..., x_k)$ with a distal regularity tuple, morally inducting on $k$.
\begin{lemma}\label{prelimbound}
        Let $E(x_1, ..., x_k)$ be a relation on a set $M$ with distal regularity tuple $\bar{c}=(c_1, ..., c_k)\in\R_{\geq 1}^k$ and coefficient $\lambda$. Suppose that, for all $i\in [k]$, $F_i: \N^{k-1}\to\R$ is a function satisfying the following.
        
        Let $u\in \N^+$, and let $a_1, ..., a_u\in M^{x_i}$ be distinct. For all $j\in [k]\setminus \{i\}$, let $P_j\subseteq M^{x_j}$ with $n_j:=|P_j|$. Then, if the $(k-1)$-graph $\bigcap_{e=1}^u E(P_1, ..., P_{i-1}, a_e, P_{i+1}, ..., P_k)$ is $K_{u, ..., u}$-free, then its size is $O_u(F_i(n_1, ..., n_{i-1}, n_{i+1}, ..., n_k))$.
\begin{enumerate}[(i)]
    \item Let $\gamma\geq 0$, and suppose that for all finite $P_i\subseteq M^{x_i}$ with $n_i:=|P_i|$, if $E(P_1, ..., P_k)$ is $K_{u, ..., u}$-free, then
    \[|E(P_1, ..., P_k)|\ll_{u,c_1,\lambda} n_1\cdots n_kn_1^{-\gamma}+n_1F_1(n_2, ..., n_k).\]
    Then the statement above holds with $\gamma$ replaced by $\frac{1}{1+c_1(1-\gamma)}$.
    \item For all finite $P_i\subseteq M^{x_i}$ with $n_i:=|P_i|$, if $E(P_1, ..., P_k)$ is $K_{u, ..., u}$-free, then for all $i\in [k]$ and $\varepsilon>0$,
    \[|E(P_1, ..., P_k)|\ll_{u,c_i,\lambda, \varepsilon} n_1\cdots n_kn_i^{-\frac{1}{c_i}+\varepsilon}+n_iF_i(n_1, ..., n_{i-1}, n_{i+1}, ..., n_k).\]
\end{enumerate}
\end{lemma}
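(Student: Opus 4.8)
The plan is to prove (i) by one application of the distal regularity lemma for $\bar c$ together with a recursive use of the hypothesis, and then to deduce (ii) by iterating (i) and letting the exponent tend to $1/c_i$.

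For (i), suppose the displayed bound holds with exponent $\gamma$, let $E(P_1,\ldots,P_k)$ be $K_{u,\ldots,u}$-free, and apply the distal regularity lemma for $\bar c$ with a parameter $\delta>0$ to be fixed later, obtaining partitions $P_i=A^i_1\sqcup\cdots\sqcup A^i_{K_i}$ with $K_i\le\lambda\delta^{-c_i}$ and bad cells of total size at most $\lambda\delta\,n_1\cdots n_k$. I would bound $|E(P_1,\ldots,P_k)|$ by the contribution of the bad cells (at most $\lambda\delta\,n_1\cdots n_k$), that of the anti-clique good cells (zero), and that of the clique good cells, treating the last as follows. Fix $j_1$ with $|A^1_{j_1}|\ge u$ and pick distinct $a_1,\ldots,a_u\in A^1_{j_1}$. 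For every clique cell $A^1_{j_1}\times A^2_{j_2}\times\cdots\times A^k_{j_k}$ we have $A^2_{j_2}\times\cdots\times A^k_{j_k}\subseteq\bigcap_{e=1}^u E(a_e,P_2,\ldots,P_k)$, and this intersection is $K_{u,\ldots,u}$-free, since a $K_{u,\ldots,u}$ inside it would, together with $\{a_1,\ldots,a_u\}$, give one in $E(P_1,\ldots,P_k)$; by hypothesis it therefore has size $O_u(F_1(n_2,\ldots,n_k))$, and as the cells are pairwise disjoint the clique cells over this $j_1$ contribute at most $|A^1_{j_1}|\cdot O_u(F_1(n_2,\ldots,n_k))$, which sums over all such $j_1$ to at most $n_1\cdot O_u(F_1(n_2,\ldots,n_k))$. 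The clique cells with $|A^1_{j_1}|<u$ all lie inside $E(P'_1,P_2,\ldots,P_k)$, where $P'_1$ is the union of the parts $A^1_{j_1}$ of size less than $u$; here $|P'_1|\le uK_1\le u\lambda\delta^{-c_1}$ and $|P'_1|\le n_1$, so applying the hypothesis to the $K_{u,\ldots,u}$-free graph $E(P'_1,P_2,\ldots,P_k)$ bounds their total contribution by $O_{u,c_1,\lambda}\!\bigl((u\lambda\delta^{-c_1})^{1-\gamma}n_2\cdots n_k+n_1F_1(n_2,\ldots,n_k)\bigr)$.

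Adding these up gives $|E(P_1,\ldots,P_k)|\ll_{u,c_1,\lambda}\bigl(\lambda\delta\,n_1+(u\lambda)^{1-\gamma}\delta^{-c_1(1-\gamma)}\bigr)n_2\cdots n_k+n_1F_1(n_2,\ldots,n_k)$, and choosing $\delta$ of order $n_1^{-1/(1+c_1(1-\gamma))}$ to balance the two terms in the bracket turns it into $O_{u,c_1,\lambda}(n_1^{1-\gamma'})$ with $\gamma':=\frac1{1+c_1(1-\gamma)}$, because $\frac{c_1(1-\gamma)}{1+c_1(1-\gamma)}=1-\gamma'$; for $n_1$ below a constant depending on $u,c_1,\lambda$ (where this $\delta$ may be inadmissible) the claim is immediate from $|E(P_1,\ldots,P_k)|\le n_1\cdots n_k$. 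This is the asserted bound with $\gamma$ replaced by $\gamma'$, which proves (i). For (ii), fix $i\in[k]$; relabelling coordinates, assume $i=1$. The hypothesis of (i) holds with $\gamma_0:=0$ trivially, since $|E(P_1,\ldots,P_k)|\le n_1\cdots n_k$, so by iterating (i) it holds for each $\gamma_m$, where $\gamma_{m+1}:=\frac1{1+c_1(1-\gamma_m)}$. The map $g(t):=\frac1{1+c_1(1-t)}$ is increasing, maps $[0,1/c_1]$ into itself, fixes $1/c_1$, and satisfies $g(t)>t$ on $[0,1/c_1)$ because $1-t\bigl(1+c_1(1-t)\bigr)=(1-c_1t)(1-t)>0$ there (this uses $c_1\ge 1$); hence $\gamma_m\nearrow 1/c_1$. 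Given $\varepsilon>0$, choosing $m=m(\varepsilon,c_1)$ with $\gamma_m>1/c_1-\varepsilon$ gives $|E(P_1,\ldots,P_k)|\ll_{u,c_1,\lambda,\varepsilon}n_1^{1-\gamma_m}n_2\cdots n_k+n_1F_1(n_2,\ldots,n_k)\le n_1\cdots n_k\,n_1^{-1/c_1+\varepsilon}+n_1F_1(n_2,\ldots,n_k)$, the constant absorbing the finitely many iterations; this is (ii).

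I expect the main obstacle to be the bookkeeping in (i): ensuring that the $F_1$-terms from the two families of clique cells and from the recursive invocation of the hypothesis merge into the single term $n_1F_1(n_2,\ldots,n_k)$ with no growing prefactor; checking that the constant output by (i) depends only on $u,c_1,\lambda$, uniformly in $\gamma\in[0,1/c_1]$, so that the $m$-fold iteration in (ii) produces a constant depending only on $u,c_1,\lambda,\varepsilon$; and verifying that the optimal $\delta$ is admissible (with the small-$n_1$ regime handled trivially). The conceptual crux is the recursion step: aggregating the first-coordinate parts of size $<u$ into $P'_1$ of size $O(\delta^{-c_1})$ and feeding $E(P'_1,P_2,\ldots,P_k)$ back into the weaker bound effectively replaces $n_1$ by $\delta^{-c_1}$ as the first-coordinate size, which is exactly what upgrades $\gamma$ to $\gamma'$ and, in the limit, yields the exponent $1/c_1$.
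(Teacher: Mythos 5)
Your proof is correct and follows essentially the same route as the paper's: apply the distal regularity lemma with $\delta\sim n_1^{-1/(1+c_1(1-\gamma))}$, bound the bad cells, handle the large first-coordinate parts via the $(k-1)$-graph hypothesis, feed the union of small parts back into the $\gamma$-bound, and then iterate $\gamma\mapsto\frac{1}{1+c_1(1-\gamma)}$ to the fixed point $1/c_1$. The only cosmetic difference is that the paper fixes $\delta$ up front and decomposes as $|T|+|E(H_1,\ldots)|+\sum_{j>L}|E(A^1_j,\ldots)\setminus T|$ rather than by cell type, and it does not need your separate small-$n_1$ case.
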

\begin{proof}
    (i) Let $\delta=n_1^{-\frac{1}{1+c_1(1-\gamma)}}$. With this value of $\delta$, partition each $P_i=A^i_1\sqcup \cdots \sqcup A^i_{K_i}$ as in the definition of $\bar{c}$ as a distal regularity tuple for $E$, with $\Sigma\subseteq [K_1]\times \cdots \times [K_k]$ the index set of bad cells. Let $T:=\bigcup_{(j_1, ..., j_k)\in \Sigma} A^1_{j_1}\times\cdots\times A^k_{j_k}$. Without loss of generality, let $0\leq L\leq K_1$ be such that, for all $1\leq j\leq K_1$, $|A^1_j|\geq u$ if and only if $j>L$.

    Let $j>L$. Then $|A^1_j|\geq u$, so let $a_1, ..., a_u\in A^1_j$ be distinct. Then
    \[E(A^1_j, P_2, ..., P_k)\setminus T\subseteq A^1_j\times \bigcap_{e=1}^u E(a_e, P_2, ..., P_k).\]
    Since $E(P_1, ..., P_k)$ is $K_{u,...,u}$-free, the $(k-1)$-graph $\bigcap_{e=1}^u E(a_e, P_2, ..., P_k)$ is $K_{u,...,u}$-free, and so by assumption
    \[|E(A^1_j, P_2, ..., P_k)\setminus T|\ll_u |A^1_j|F_1(n_2, ..., n_k).\]
    
    Let $H_1:=\bigcup_{j=1}^L A^1_j$, so $|H_1|\leq Lu\leq K_1u\leq \lambda\delta^{-c_1}u$. By assumption, $|E(H_1, P_2, ..., P_k)|\ll_{u,c_1,\lambda} (\lambda\delta^{-c_1}u)^{1-\gamma}n_2\cdots n_k+n_1F_1(n_2, ..., n_k)\ll_{u,\lambda}\delta^{-c_1(1-\gamma)}n_2\cdots n_k+n_1F_1(n_2, ..., n_k)$. Thus,
    \begin{align*}
        &|E(P_1, ..., P_k)|\\
        &\leq |T|+|E(H_1, P_2, ..., P_k)|+\sum_{j=L+1}^{K_1} |E(A^1_j, P_2, ..., P_k)\setminus T|\\
        &\ll_{u,c_1,\lambda} \delta n_1\cdots n_k+\delta^{-c_1(1-\gamma)}n_2\cdots n_k+n_1F_1(n_2, ..., n_k)+\sum_{j=L+1}^{K_1} |A^1_j|F_1(n_2, ..., n_k)\\
        &\leq 2n_1\cdots n_kn_1^{-\frac{1}{1+c_1(1-\gamma)}}+2n_1F_1(n_2, ..., n_k).
    \end{align*}

    (ii) By symmetry, we may assume that $i=1$. Let $f:[0,\frac{1}{c_1}]\to [\frac{1}{c_1+1},\frac{1}{c_1}]$ be given by $\gamma \mapsto \frac{1}{1+c_1(1-\gamma)}$. The statement in (i) holds for $\gamma=0$, so it suffices to show that $f^n(0)\to \frac{1}{c_1}$ as $n\to\infty$. Note that for all $\gamma\in [0,\frac{1}{c_1}]$ we have $\gamma\leq f(\gamma)$: indeed, $(c_1\gamma-1)(\gamma-1)\geq 0$, which rearranges to $\gamma(1+c_1(1-\gamma))\leq 1$. Thus, $(f^n(0))_n$ is an increasing sequence in $[\frac{1}{c_1+1},\frac{1}{c_1}]$, and so it converges to some limit $L\in [\frac{1}{c_1+1},\frac{1}{c_1}]$. But then $L=\frac{1}{1+c_1(1-L)}$, which rearranges to $(c_1L-1)(L-1)=0$, and so $L=\frac{1}{c_1}$ since $c_1\geq 1$.
\end{proof}
\begin{remark}\label{prelimboundremark}
    In Lemma \ref{prelimbound}, when $k=2$, $F_i$ can be chosen to be the constant $1$-valued function. Indeed, if a $1$-graph is $K_u$-free, then its size is at most $u-1=O_u(1)$.
\end{remark}
\begin{remark}
    After the preparation of this paper, we were made aware of a Tur\'an-type argument in \cite[Corollary 5.1]{hans} which allows one to remove the $\varepsilon$ from the bound in Lemma \ref{prelimbound}(ii) as long as $c_i>1$. Even so, bootstrapping this infinitesimally improved bound via our methods does not allow us to remove the $\varepsilon$ in our main theorem (Theorem \ref{mainabstractbound}) or its binary counterpart (Theorem \ref{graphs}), so we retain the statement and proof of Lemma \ref{prelimbound} as written to provide a different perspective and proof method.
\end{remark}
\section{Binary relations}\label{sectionbinary}
We will first consider binary relations, for two reasons. Firstly, for binary relations, our main theorem holds under a weaker assumption --- namely, $\bar{c}$ is only required to be a distal regularity tuple, not a strong distal regularity tuple. Secondly, the exposition is much cleaner for binary relations, and so will hopefully illuminate the proof strategy for arbitrary relations.
\begin{theorem}\label{graphs}
    Let $E(x, y)$ be a relation on a set $M$, with distal regularity tuple $\bar{c}=(c_1, c_2)\in\R_{\geq 1}^2$ and coefficient $\lambda$. Then, for all finite $P\subseteq M^x$ and $Q\subseteq M^y$ with $m:=|P|$ and $n:=|Q|$, if $E(P,Q)$ is $K_{u,u}$-free, then for all $\varepsilon>0$ we have
    \[|E(P,Q)|\ll_{u, \bar{c}, \lambda, \varepsilon}m^{\frac{c_2(c_1-1)}{c_1c_2-1}+\varepsilon}n^{\frac{c_1(c_2-1)}{c_1c_2-1}+\varepsilon}+m+n,\]
    where $\frac{c_2-1}{c_1c_2-1}, \frac{c_1-1}{c_1c_2-1}:= \lim_{\delta\to 0}\frac{(1+\delta)-1}{(1+\delta)^2-1}=\frac{1}{2}$ if $c_1=c_2=1$.
\end{theorem}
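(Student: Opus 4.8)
The plan is to combine the two one-sided bounds coming from Lemma~\ref{prelimbound}(ii) by repeatedly applying the distal regularity lemma and bootstrapping the exponents.

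\emph{Step 1: base bounds.} First I would specialise Lemma~\ref{prelimbound}(ii) to $k=2$, taking $F_1,F_2$ to be the constant function $1$, which is permitted by Remark~\ref{prelimboundremark}. This yields the two one-sided bounds $|E(P,Q)|\ll_{u,c_1,\lambda,\varepsilon}m^{1-1/c_1+\varepsilon}n+m$ and $|E(P,Q)|\ll_{u,c_2,\lambda,\varepsilon}mn^{1-1/c_2+\varepsilon}+n$. A routine exponent computation then shows that the first already implies the claimed bound whenever $m\geq n^{c_1}$, and the second whenever $n\geq m^{c_2}$ (and the degenerate case $c_1=c_2=1$ is immediate, since then both one-sided bounds give $m^{\varepsilon}n+m$ or $mn^{\varepsilon}+n$, essentially $m^{1/2+\varepsilon}n^{1/2+\varepsilon}+m+n$ after interpolation). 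So it remains to handle the ``balanced'' range $m<n^{c_1}$ and $n<m^{c_2}$.

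\emph{Step 2: the regularity step.} In this range I would apply the distal regularity lemma with a parameter $\delta$ to be optimised, getting partitions $P=A^1_1\sqcup\cdots\sqcup A^1_{K_1}$ and $Q=A^2_1\sqcup\cdots\sqcup A^2_{K_2}$ with $K_1\leq\lambda\delta^{-c_1}$, $K_2\leq\lambda\delta^{-c_2}$, together with a set $\Sigma$ of bad cells. The bad cells contribute at most $\lambda\delta mn$ edges; homogenous anti-clique cells contribute none; and for each homogenous clique cell $A^1_i\times A^2_j$ the $K_{u,u}$-freeness of $E(P,Q)$ forces $|A^1_i|<u$ or $|A^2_j|<u$. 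More precisely, for each $j$ with $|A^2_j|\geq u$ the sizes of those $A^1_i$ with $A^1_i\times A^2_j$ a clique sum to at most $u-1$ (otherwise one extracts a $K_{u,u}$), so the clique cells with $|A^2_j|\geq u$ contribute $\ll_u n$ edges and, symmetrically, those with $|A^1_i|\geq u$ contribute $\ll_u m$; the remaining clique cells all lie inside $H_1\times H_2$, where $H_1$ (resp.\ $H_2$) is the union of the cells of size $<u$, so $|H_1|\leq\lambda u\delta^{-c_1}$ and $|H_2|\leq\lambda u\delta^{-c_2}$. Altogether this gives $|E(P,Q)|\ll_u\lambda\delta mn+m+n+|E(H_1,H_2)|$.

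\emph{Step 3: bootstrap.} For a suitable choice of $\delta$ the sizes $|H_1|,|H_2|$ are strictly smaller than $m,n$, so I would bound $|E(H_1,H_2)|$ by the inductive hypothesis (inducting on $m+n$), and in the sub-ranges where the sides of $E(H_1,H_2)$ are unbalanced directly by the one-sided bounds of Step~1, before optimising $\delta$; iterating this improvement step drives the exponents to the fixed point $\bigl(\tfrac{c_2(c_1-1)}{c_1c_2-1},\tfrac{c_1(c_2-1)}{c_1c_2-1}\bigr)$, which is exactly the common solution of the two identities $\tfrac{c_1(c_2-1)}{c_1c_2-1}=c_1\bigl(1-\tfrac{c_2(c_1-1)}{c_1c_2-1}\bigr)$ and $\tfrac{c_2(c_1-1)}{c_1c_2-1}=c_2\bigl(1-\tfrac{c_1(c_2-1)}{c_1c_2-1}\bigr)$ (the first expressing that the one-sided bound $(1-1/c_1,1)$ already lies on the relevant line, and the second symmetrically), finitely many iterations being responsible for the $\varepsilon$ in the exponents. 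I expect the main obstacle to be precisely this last step: choosing the sequence of regularity parameters so that the recursive contribution of $E(H_1,H_2)$ — whose two sides have very different orders of magnitude, $\delta^{-c_1}$ and $\delta^{-c_2}$ — is correctly balanced against $\delta mn$ and so that the iterated exponents converge to the claimed values rather than to something weaker (this is where the one-sided bounds must be reinvested at the right moments, rather than crudely recursing on $|E(H_1,H_2)|$). The secondary nuisances will be checking that the sub-problems are genuinely smaller so the induction is well-founded, and bookkeeping the accumulated $\varepsilon$'s and implied constants.
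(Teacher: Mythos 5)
Your Step~1 and the first half of Step~2 are sound and closely track the paper: the one-sided bounds from Lemma~\ref{prelimbound}(ii) let you dispose of the unbalanced regimes, and the split of the clique cells by whether $|A^1_i|\geq u$ or $|A^2_j|\geq u$ (bounding the mixed cases by $(u-1)(m+n)$) is exactly the $K_{u,u}$-freeness argument used in the paper's treatment of $H_3$.

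The gap is in Step~3, and it is fatal rather than a ``nuisance'': the symmetric recursion $|E(P,Q)|\leq\lambda\delta mn+(u-1)(m+n)+|E(H_1,H_2)|$ with $|H_1|\leq\lambda u\delta^{-c_1}$, $|H_2|\leq\lambda u\delta^{-c_2}$ does not have the claimed fixed point. Feeding the target bound $|E(H_1,H_2)|\ll|H_1|^{\gamma_1+\varepsilon}|H_2|^{\gamma_2+\varepsilon}+|H_1|+|H_2|$ back in, the $\delta$-dependence is $\delta^{-c_1\gamma_1-c_2\gamma_2}=\delta^{-c_1c_2(c_1+c_2-2)/(c_1c_2-1)}$; balancing this against $\delta mn$ gives, in the regime $m=n$, an exponent of $\tfrac{2c_1c_2(c_1+c_2-2)}{c_1c_2(c_1+c_2-1)-1}$, which for $c_1=c_2=2$ is $16/11$, strictly worse than the target $4/3$. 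Equivalently, the inductive step forces $\delta\leq\alpha m^{\gamma_1-1}n^{\gamma_2-1}$ (to kill $\delta mn$) and simultaneously $\delta^{-c_1\gamma_1-c_2\gamma_2}\ll m^{\gamma_1}n^{\gamma_2}$ (to kill the recursive term), and these two constraints are incompatible for large $m,n$ whenever $c_1,c_2>1$; one can check that $(c_1-1)(c_1c_2-1)<c_1(c_2-1)(c_1+c_2-2)$ is exactly the obstruction. You flag this yourself, but ``reinvesting the one-sided bounds at the right moments'' is not a concrete fix --- the one-sided bound on $E(H_1,H_2)$ is in fact \emph{weaker} than the inductive bound here, so it does not rescue the recursion.

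The missing idea is that the meagre-bad-cells hypothesis must be used \emph{locally, row by row}, in an asymmetric decomposition where only $P$ is sliced. The paper partitions $P=H_1\sqcup H_2\sqcup H_3$ by cell size and by bad column degree: $H_1$ is the union of cells $A_i$ with $|A_i|<u$; $H_2$ is the union of cells with $\sum_{j\in\Sigma_i}|B_j|>\delta^{1-\varepsilon}n$ (which is Markov-small, $|H_2|<\lambda\delta^\varepsilon m$); and $H_3$ is the rest. Both $|E(H_1,Q)|$ and $|E(H_2,Q)|$ are absorbed by the induction on $m+n$ because $|H_1|,|H_2|<m$. The crux is $H_3$: for each $A_i\subseteq H_3$, good cells contribute $\leq(u-1)|A_i|$, and the bad cells contribute $|E(A_i,\bigcup_{j\in\Sigma_i}B_j)|$, a recursive call with \emph{both} sides shrunk, $|A_i|\leq\delta^{c_1}m$ and $\sum_{j\in\Sigma_i}|B_j|\leq\delta^{1-\varepsilon}n$. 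The gain comes from the algebraic identity $c_1\gamma_1+\gamma_2=c_1$: the recursive bound carries a factor $\delta^{c_1(\gamma_1+\varepsilon)+(1-\varepsilon)(\gamma_2+\varepsilon)}=\delta^{c_1+(\text{positive in }\varepsilon)}$, and after multiplying by the $K_1\leq\lambda\delta^{-c_1}$ choices of $i$, a strictly positive power of $\delta$ survives to close the induction. Your symmetric recursion on $E(H_1,H_2)$ never accesses this per-row refinement of $\Sigma$ --- it only uses the global bound $\lambda\delta mn$ --- and that is precisely why it loses.

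Minor: in the degenerate case $c_1=c_2=1$ the bound should be $(mn)^{1/2+\varepsilon}+m+n$, which requires a short separate argument (it does not follow by ``interpolation'' of the two one-sided bounds $m^{\varepsilon}n+m$ and $mn^{\varepsilon}+n$); the paper handles this by the limiting convention in the statement.
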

\begin{proof}
    We will show that, for all $\varepsilon>0$, there are constants $\alpha=\alpha(u,\bar{c},\lambda,\varepsilon)$ and $\beta=\beta(u,\bar{c},\lambda,\varepsilon)$ such that, for all finite $P\subseteq M^x$ and $Q\subseteq M^y$ with $m:=|P|$ and $n:=|Q|$, if $E(P, Q)$ is $K_{u, u}$-free, then
    \begin{equation}\label{binaryeqnalphabeta}
        |E(P, Q)|\leq \alpha m^{\frac{c_2(c_1-1)}{c_1c_2-1}+\varepsilon}n^{\frac{c_1(c_2-1)}{c_1c_2-1}+\varepsilon}+\beta(m+n).
    \end{equation}
    The dependency between constants will be as follows:
    \begin{enumerate}[(i)]
        \item $\delta$ is sufficiently small in terms of $\bar{c}$, $\lambda$, and $\varepsilon$;
        \item $m_0$ is sufficiently large in terms of $u$, $\bar{c}$, $\lambda$, $\varepsilon$, and $\delta$;
        \item $\beta$ is sufficiently large in terms of $m_0$, $u$, $\bar{c}$, $\lambda$, and $\varepsilon$; and
        \item $\alpha$ is sufficiently large in terms of $m_0$, $\beta$, $u$, $\bar{c}$, $\lambda$, and $\delta$.
    \end{enumerate}
    
    By Lemma \ref{prelimbound}(ii) and Remark \ref{prelimboundremark}, $\abs{E(P, Q)}\ll_{u,\bar{c},\lambda,\varepsilon} m^{1-\frac{1-\nu}{c_1}}n+m$ and $\abs{E(P, Q)}\ll_{u,\bar{c},\lambda,\varepsilon} mn^{1-\frac{1-\nu}{c_2}}+n$, where $\nu\in (0,\frac{1}{2})$ is chosen such that
    \[\left(\frac{1}{1-\nu}-1\right)\max\left(\frac{c_2(c_1-1)}{c_1c_2-1},\frac{c_1(c_2-1)}{c_1c_2-1}\right)\leq \varepsilon.\]
    Note then in particular that $\frac{c_1-1}{c_1c_2-1}\frac{c_2}{1-\nu}=\frac{1}{1-\nu}\frac{c_2(c_1-1)}{c_1c_2-1}\leq \frac{c_2(c_1-1)}{c_1c_2-1}+\varepsilon$, and similarly $\frac{c_2-1}{c_1c_2-1}\frac{c_1}{1-\nu}=\frac{1}{1-\nu}\frac{c_1(c_2-1)}{c_1c_2-1}\leq \frac{c_1(c_2-1)}{c_1c_2-1}+\varepsilon$.
    
    If $m\leq n^{\frac{1-\nu}{c_2}}$ and $\beta$ is sufficiently large in terms of $u$, $\bar{c}$, $\lambda$, and $\varepsilon$, then $\abs{E(P, Q)}\leq \beta n$ since $\abs{E(P, Q)}\ll_{u,\bar{c},\lambda,\varepsilon} mn^{1-\frac{1-\nu}{c_2}}+n$. Therefore, for the rest of the proof we assume that $n<m^{\frac{c_2}{1-\nu}}$, which implies
    \begin{equation}\label{nbound}
        n=n^{\frac{c_1-1}{c_1c_2-1}}n^{\frac{c_1(c_2-1)}{c_1c_2-1}}\leq m^{\frac{c_2(c_1-1)}{c_1c_2-1}+\varepsilon}n^{\frac{c_1(c_2-1)}{c_1c_2-1}}\leq m^{\frac{c_2(c_1-1)}{c_1c_2-1}+\varepsilon}n^{\frac{c_1(c_2-1)}{c_1c_2-1}+\varepsilon}.
    \end{equation}
    Similarly, for the rest of the proof we assume that $m<n^{\frac{c_1}{1-\nu}}$, which implies
    \begin{equation}\label{mbound}
        m=m^{\frac{c_2-1}{c_2c_1-1}}m^{\frac{c_2(c_1-1)}{c_2c_1-1}}\leq n^{\frac{c_1(c_2-1)}{c_2c_1-1}+\varepsilon}m^{\frac{c_2(c_1-1)}{c_2c_1-1}}\leq m^{\frac{c_2(c_1-1)}{c_1c_2-1}+\varepsilon}n^{\frac{c_1(c_2-1)}{c_1c_2-1}+\varepsilon}.
    \end{equation}
    
    Let $\alpha=\alpha(u,\bar{c},\lambda,\varepsilon)$ and $\beta=\beta(u,\bar{c},\lambda,\varepsilon)$ be sufficiently large, to be chosen later. We show by induction on $m+n$ that (\ref{binaryeqnalphabeta}) holds.

    Let $m_0:=m_0(u,\bar{c},\lambda,\varepsilon,\delta)$ such that
    \begin{equation}\label{defnm_0diamond}
        m_0^{\frac{c_2(c_1-1)}{c_1c_2-1}+\varepsilon}>4(\lambda\delta^{-c_1}u)^{\frac{c_2(c_1-1)}{c_1c_2-1}+\varepsilon}.
    \end{equation}
    If $m+n<m_0$, then (\ref{binaryeqnalphabeta}) holds by choosing values for $\alpha$ and $\beta$ that are sufficiently large in terms of $m_0$. Thus, henceforth assume that $m+n\geq m_0$, and suppose that (\ref{binaryeqnalphabeta}) holds when $|P|+|Q|<m+n$. If $m<m_0$, then $|E(P,Q)|<m_0n\leq \beta n$ assuming $\beta\geq m_0$, so henceforth suppose $m\geq m_0$.

    For $\delta:=\delta(\bar{c}, \lambda,\varepsilon)<1$ to be chosen later, partition $P=A_1\sqcup \cdots \sqcup A_{K_1}$ and $Q=B_1\sqcup \cdots \sqcup B_{K_2}$ as in the definition of $\bar{c}$ as a distal regularity tuple, with $\Sigma\subseteq [K_1]\times [K_2]$ the index set of bad cells. By refining the partition and replacing $\lambda$ with $\lambda+2$ if necessary, we can assume that $\abs{A_i}\leq \delta^{c_1}m$ for all $i\in [K_1]$.
    
    For $i\in [K_1]$, let $\Sigma_i:=\{j\in [K_2]: (i,j)\in\Sigma\}$. Without loss of generality, let $0\leq L\leq L'\leq K_1$ be such that:
    \begin{enumerate}[(i)]
        \item For all $i\in [K_1]$, $\abs{A_i}\geq u$ if and only if $i>L$; and
        \item For all $i\in [K_1]\setminus [L]$, $\sum_{j\in \Sigma_i}\abs{B_j}\leq \delta^{1-\varepsilon}n$ if and only if $i>L'$.
    \end{enumerate}
    Partition $P$ into $H_1:=\bigcup_{i=1}^L A_i$, $H_2:=\bigcup_{i=L+1}^{L'} A_i$, and $H_3:=\bigcup_{i=L'+1}^{K_1} A_i$. We will bound $|E(P,Q)|$ by bounding $\abs{E(H_1, Q)}$, $\abs{E(H_2,Q)}$, and $\abs{E(H_3, Q)}$.
    
    Consider $E(H_1, Q)$. Note that $\abs{H_1}\leq Lu\leq K_1u\leq \lambda\delta^{-c_1}u$. Choosing $m_0>\lambda\delta^{-c_1}u$, we have $m\geq m_0>\lambda\delta^{-c_1}u\geq \abs{H_1}$. By the induction hypothesis,
    \begin{align*}
        \abs{E(H_1, Q)}&\leq \alpha (\lambda\delta^{-c_1}u)^{\frac{c_2(c_1-1)}{c_1c_2-1}+\varepsilon}n^{\frac{c_1(c_2-1)}{c_1c_2-1}+\varepsilon}+\beta(\lambda\delta^{-c_1}u+n)\\
        &\leq \frac{\alpha}{4}m^{\frac{c_2(c_1-1)}{c_1c_2-1}+\varepsilon}n^{\frac{c_1(c_2-1)}{c_1c_2-1}+\varepsilon}+\beta(m+n),
    \end{align*}
    recalling that $m\geq m_0$ and applying (\ref{defnm_0diamond}).

    Next, consider $E(H_2, Q)$. By definition, $\bigcup_{i=L+1}^{L'}\bigcup_{j\in \Sigma_i}A_i\times B_j\subseteq \bigcup_{(i,j)\in\Sigma}A_i\times B_j$. The set on the right has size at most $\lambda \delta mn$, and for all $L+1\leq i\leq L'$ we have $\sum_{j\in \Sigma_i}\abs{B_j}> \delta^{1-\varepsilon}n$. Thus,
    \[\abs{H_2}=\abs{\bigcup_{i=L+1}^{L'} A_i}< \frac{\lambda\delta mn}{\delta^{1-\varepsilon}n}=\lambda \delta^\varepsilon m.\]
    In particular, assuming $\delta$ is sufficiently small in terms of $\lambda$ and $\varepsilon$, we have $\abs{H_2}<m$, so by the induction hypothesis,
    \begin{align*}
        \abs{E(H_2, Q)}&< \alpha (\lambda\delta^{\varepsilon}m)^{\frac{c_2(c_1-1)}{c_1c_2-1}+\varepsilon}n^{\frac{c_1(c_2-1)}{c_1c_2-1}+\varepsilon}+\beta(\lambda\delta^{\varepsilon}m+n)\\
        &\leq \frac{\alpha}{4} m^{\frac{c_2(c_1-1)}{c_1c_2-1}+\varepsilon}n^{\frac{c_1(c_2-1)}{c_1c_2-1}+\varepsilon}+\beta(m+n)
    \end{align*}
    for $\delta$ sufficiently small in terms of $\lambda$ and $\varepsilon$.

    Next, consider $E(H_3, Q)$. We will bound its size by partitioning it into two:
    \[E(H_3, Q)=\left(\bigcup_{i=L'+1}^{K_1}\bigcup_{j\in [K_2]\setminus \Sigma_i} E(A_i, B_j)\right)\sqcup \left(\bigcup_{i=L'+1}^{K_1}\bigcup_{j\in \Sigma_i} E(A_i, B_j)\right).\]
    
    Fix $L'+1\leq i\leq K_1$. For $j\in [K_2]\setminus \Sigma_i$, $E(A_i, B_j)=A_i\times B_j$ or $\emptyset$. Since $\abs{A_i}\geq u$ and $E(A_i, Q)$ is $K_{u,u}$-free, we thus have that $\abs{E(A_i, \bigcup_{j\in [K_2]\setminus \Sigma_i}B_j)}\leq (u-1)\abs{A_i}$. Hence,
    \[\sum_{i=L'+1}^{K_1}\sum_{j\in [K_2]\setminus \Sigma_i}\abs{E(A_i, B_j)}\leq (u-1)m.\]

    Now, $\sum_{j\in \Sigma_i}\abs{B_j}\leq \delta^{1-\varepsilon}n$ by definition. Recall also that $\abs{A_i}\leq \delta^{c_1}m$. In particular, $\abs{A_i}<m$, so by the induction hypothesis,
    \begin{align*}
        \abs{E\left(A_i, \bigcup_{j\in \Sigma_i}B_j\right)}&\leq \alpha (\delta^{c_1} m)^{\frac{c_2(c_1-1)}{c_1c_2-1}+\varepsilon}(\delta^{1-\varepsilon}n)^{\frac{c_1(c_2-1)}{c_1c_2-1}+\varepsilon}+\beta(\delta^{c_1}m+\delta^{1-\varepsilon}n)\\
        &= \alpha \delta^{c_1+\varepsilon(c_1-\frac{c_1(c_2-1)}{c_1c_2-1}+1-\varepsilon)} m^{\frac{c_2(c_1-1)}{c_1c_2-1}+\varepsilon}n^{\frac{c_1(c_2-1)}{c_1c_2-1}+\varepsilon}+\beta(\delta^{c_1}m+\delta^{1-\varepsilon}n)\\
        &\leq \frac{\alpha}{5} \delta^{c_1} m^{\frac{c_2(c_1-1)}{c_1c_2-1}+\varepsilon}n^{\frac{c_1(c_2-1)}{c_1c_2-1}+\varepsilon}+\beta(m+n)
    \end{align*}
    for $\delta$ sufficiently small in terms of $\bar{c}$ and $\varepsilon$. Thus,
    \begin{align*}
        \sum_{i=L'+1}^{K_1}\sum_{j\in \Sigma_i}\abs{E(A_i, B_j)}&\leq \lambda\delta^{-c_1}\frac{\alpha}{5} \delta^{c_1} m^{\frac{c_2(c_1-1)}{c_1c_2-1}+\varepsilon}n^{\frac{c_1(c_2-1)}{c_1c_2-1}+\varepsilon}+\lambda\delta^{-c_1}\beta(m+n)\\
        &\leq \frac{\alpha}{4} m^{\frac{c_2(c_1-1)}{c_1c_2-1}+\varepsilon}n^{\frac{c_1(c_2-1)}{c_1c_2-1}+\varepsilon}+\lambda\delta^{-c_1}\beta(m+n)
    \end{align*}
    for $\alpha$ sufficiently large in terms of $\lambda$.

    Putting all this together,
    \begin{align*}
        |E(P,Q)|&= \abs{E(H_1, Q)}+\abs{E(H_2, Q)} + \sum_{i=L'+1}^{K_1}\sum_{j\in [K_2]\setminus \Sigma_i}\abs{E(A_i, B_j)}+\sum_{i=L'+1}^{K_1}\sum_{j\in \Sigma_i}\abs{E(A_i, B_j)}\\
        &\leq \frac{3\alpha}{4}m^{\frac{c_2(c_1-1)}{c_1c_2-1}+\varepsilon}n^{\frac{c_1(c_2-1)}{c_1c_2-1}+\varepsilon}+(2\beta+\lambda\delta^{-c_1}\beta+u-1)(m+n)\\
        &\leq \alpha m^{\frac{c_2(c_1-1)}{c_1c_2-1}+\varepsilon}n^{\frac{c_1(c_2-1)}{c_1c_2-1}+\varepsilon},
    \end{align*}
    where the last inequality was obtained from (\ref{nbound}) and (\ref{mbound}), choosing $\alpha$ to be sufficiently large in terms of $\beta$, $u$, $\bar{c}$, $\lambda$, and $\delta$. Thus, (\ref{binaryeqnalphabeta}) holds as claimed.
\end{proof}
\begin{remark}\label{binaryepsilonremark}
    It is straightforward to observe that the bound in Theorem \ref{graphs} can be infinitesimally improved to, say, $m^{\frac{c_2(c_1-1)}{c_1c_2-1}+\varepsilon}n^{\frac{c_1(c_2-1)}{c_1c_2-1}}+m+n$. Indeed, if $m\leq n^{\frac{1}{2c_1}}$ then $|E(P,Q)|\ll_{u,\bar{c},\lambda} n$ by Lemma \ref{prelimbound}. Assuming therefore, without loss of generality, that $n<m^{2c_1}$, by Theorem \ref{graphs},
    \[|E(P,Q)|\ll_{u, \bar{c}, \lambda, \varepsilon}m^{\frac{c_2(c_1-1)}{c_1c_2-1}+\varepsilon}n^{\frac{c_1(c_2-1)}{c_1c_2-1}}m^{2c_1\varepsilon}+m+n,\]
    and so
    \[|E(P,Q)|\ll_{u, \bar{c}, \lambda, \varepsilon}m^{\frac{c_2(c_1-1)}{c_1c_2-1}+\varepsilon}n^{\frac{c_1(c_2-1)}{c_1c_2-1}}+m+n.\]
\end{remark}
    
\section{The general case}\label{sectionmain}
We now proceed with the proof of the main theorem for an arbitrary relation.
\begin{lemma}\label{tupleinduction}
    Let $E(x_1, ..., x_k)$ be a relation on a set $M$, with strong distal regularity tuple $\bar{c}=(c_1, ..., c_k)\in\R_{\geq 1}^k$ and coefficient $\lambda$. For all $u\in\N^+$ and distinct $a_1, ..., a_u\in M^{x_1}$, the relation
    \[R(x_2, ..., x_k):=\bigwedge_{e=1}^u E(a_e, x_2, ..., x_k)\]
    has strong distal regularity tuple $(c_2, ..., c_k)$ with coefficient $u\lambda$.
\end{lemma}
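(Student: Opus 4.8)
The plan is to take the strong distal regularity partitions for $E$ and restrict them to produce strong distal regularity partitions for $R$. Fix $\delta > 0$ and finite sets $P_i \subseteq M^{x_i}$ for $i \in \{2, \ldots, k\}$, with $n_i := |P_i|$. The idea is to apply the definition of $\bar{c}$ as a strong distal regularity tuple for $E$, but with $P_1$ taken to be the finite set $\{a_1, \ldots, a_u\}$ (throwing away repetitions — but in fact we can just take $P_1$ to be any finite set containing $a_1, \ldots, a_u$, or handle the multiset carefully; the cleanest is to let $P_1$ be a set of size exactly the number of distinct $a_e$'s). Actually the subtle point is that $R$ involves a conjunction over $e \in [u]$, so I need all of $a_1, \ldots, a_u$ to sit in a \emph{single} cell of the $P_1$-partition for the homogeneity to transfer — and there is no reason the distal regularity partition of $P_1$ does that.

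**The fix, and the main obstacle.** So the right move is \emph{not} to apply strong distal regularity to $E$ with a large $P_1$, but rather with the tiny set $P_1 := \{a_1, \ldots, a_u\}$ itself (of size $u' \leq u$). Then run the strong distal regularity lemma for $E$ on $P_1, P_2, \ldots, P_k$ with parameter $\delta' := \delta/u$ (or some fixed rescaling): we obtain equipartitions $P_1 = A^1_1 \sqcup \cdots \sqcup A^1_{K_1}$ (with $K_1 \leq \lambda (\delta')^{-c_1}$, but also trivially $K_1 \leq u$ since $|P_1| \leq u$, so $K_1$ is just bounded by $u$) and $P_i = A^i_{1} \sqcup \cdots \sqcup A^i_{K_i}$ for $i \geq 2$, with bad-cell index set $\Sigma \subseteq [K_1] \times \cdots \times [K_k]$ satisfying the three conditions for $E$. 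The equipartitions $P_i = \bigsqcup_j A^i_j$ for $i \in \{2, \ldots, k\}$ are the candidate equipartitions for $R$, and the candidate bad-cell set is
\[
\Lambda := \{(j_2, \ldots, j_k) \in [K_2] \times \cdots \times [K_k] : (j_1, j_2, \ldots, j_k) \in \Sigma \text{ for some } j_1 \in [K_1]\}.
\]
I expect the homogeneity condition (b) to be the heart of the matter: for $(j_2, \ldots, j_k) \notin \Lambda$, every cell $A^1_{j_1} \times A^2_{j_2} \times \cdots \times A^k_{j_k}$ is $E$-homogeneous; I need to know it is in fact a \emph{clique} for those $j_1$ with $A^1_{j_1} \ni$ some $a_e$, in order to conclude $A^2_{j_2} \times \cdots \times A^k_{j_k}$ is $R$-homogeneous. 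This is where one must be slightly careful: if some $A^1_{j_1} \times A^2_{j_2} \times \cdots \times A^k_{j_k}$ is the empty relation (anti-clique) and contains some $a_e$, then $R(A^2_{j_2}, \ldots, A^k_{j_k}) = \emptyset$, which is still $R$-homogeneous — good. If all the relevant cells are cliques, then $R(A^2_{j_2}, \ldots, A^k_{j_k}) = A^2_{j_2} \times \cdots \times A^k_{j_k}$. Either way homogeneity transfers, so (b) is fine.

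**Counting cells and bad mass.** For condition (c), I need $K_i \leq \lambda \delta^{-c_i}$ for $i \in \{2, \ldots, k\}$ — but strong distal regularity for $E$ with parameter $\delta$ gives exactly $K_i \leq \lambda \delta^{-c_i}$, so provided I run the $E$-lemma with parameter $\delta$ itself (not $\delta/u$) this is immediate; the only place I might want a smaller parameter is condition (a), so let me reconsider. For condition (a), the bad mass for $R$: summing $|A^2_{j_2} \times \cdots \times A^k_{j_k}|$ over $(j_2, \ldots, j_k) \in \Lambda$. For each such tuple there is $j_1 \in [K_1]$ with $(j_1, \ldots, j_k) \in \Sigma$, but the issue is that a single $(j_2, \ldots, j_k)$ might be "blamed" on a cell $A^1_{j_1}$ that could be as small as $1$ (equipartition of a $u$-element set into $K_1$ parts), whereas the $E$-bad-mass bound $\sum_\Sigma |A^1_{j_1} \times \cdots| \leq \lambda \delta |P_1| n_2 \cdots n_k$ has a factor $|A^1_{j_1}|$, potentially $\geq 1$, and a factor $|P_1| \leq u$ on the right. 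So $\sum_{\Lambda} |A^2_{j_2} \times \cdots \times A^k_{j_k}| \leq \sum_{\Sigma} |A^1_{j_1} \times A^2_{j_2} \times \cdots \times A^k_{j_k}| \leq \lambda \delta |P_1| \, n_2 \cdots n_k \leq \lambda \delta u \, n_2 \cdots n_k$, which has an unwanted factor of $u$. The clean remedy is to run the $E$-lemma with parameter $\delta/u$: this preserves (b) verbatim, changes (a) to give bad mass $\leq \lambda (\delta/u) u \, n_2 \cdots n_k = \lambda \delta \, n_2 \cdots n_k$ as required, and for (c) gives $K_i \leq \lambda (\delta/u)^{-c_i} = \lambda u^{c_i} \delta^{-c_i}$, i.e. coefficient $\lambda u^{c_i}$ rather than $\lambda$ — but the lemma as stated claims coefficient $\lambda$, independent of $u$! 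So I must avoid inflating $\delta$. The resolution: first restrict, then note each $A^1_{j_1}$ used to blame a tuple in $\Lambda$ has $|A^1_{j_1}| \geq 1$, and in the $E$-bad-mass sum the term for $(j_1, \ldots, j_k)$ is $|A^1_{j_1}| \cdot |A^2_{j_2} \times \cdots \times A^k_{j_k}| \geq |A^2_{j_2} \times \cdots \times A^k_{j_k}|$; meanwhile each $(j_2, \ldots, j_k) \in \Lambda$ gets counted (at least once) by \emph{some} $j_1$, and since the $A^1_{j_1}$'s partition a set of size $\leq u$, summing only over a single representative $j_1$ per tuple in $\Lambda$ still gives $\sum_\Lambda |A^2_{j_2} \times \cdots| \leq \sum_\Sigma |A^1_{j_1}||A^2_{j_2} \times \cdots| \leq \lambda \delta |P_1| n_2 \cdots n_k$. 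Hmm, the factor $|P_1|$ persists. I think the honest conclusion is that the lemma should be proved with the $E$-lemma invoked at parameter $\delta$ but then one absorbs the factor $u$ — since $u$ is a fixed parameter and "coefficient $\lambda$" in this paper's $\ll_{u, \bar c, \lambda, \varepsilon}$ framework is allowed to depend on $u$ downstream — yet the statement explicitly says coefficient $\lambda$. Given the statement, I will assume the intended argument runs the $E$-lemma at parameter $\delta$ directly on $P_1 = \{a_1, \ldots, a_u\}$ and exploits that $K_1$ is bounded independently of $\delta$ (indeed $K_1 \le u$), so the "real" $\delta$-dependence only enters through $K_2, \ldots, K_k$ and the bad-mass bound, where the factor $|P_1| = u$ gets absorbed by \emph{scaling down} only the way we partition $P_1$ — i.e. take the $P_1$-partition to be into singletons, forcing $K_1 = u$, and re-derive (a) using $\delta$-regularity of the original $E$-partition at parameter $\delta/u$ only for the $P_1$-coordinate. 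I expect reconciling the coefficient to be exactly $\lambda$ (rather than $\lambda$ times a function of $u$) to be the one genuinely delicate bookkeeping point, and the natural fix — if the clean bound truly needs it — is to note the paper may intend "coefficient $\lambda$" to tacitly allow the $u$-dependence already present throughout, or to simply invoke the $E$-lemma at $\delta' = \delta$ and observe that restricting to a $u$-point set $P_1$ makes the $|P_1|$ factor a harmless constant. Modulo this, conditions (b) and (c) transfer directly, giving $R$ the strong distal regularity tuple $(c_2, \ldots, c_k)$.
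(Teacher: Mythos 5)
Your approach is essentially the same as the paper's: set $P_1 := \{a_1, \ldots, a_u\}$, invoke the strong distal regularity of $E$ on $P_1, P_2, \ldots, P_k$, project the bad-cell index set onto the last $k-1$ coordinates, and verify the three conditions. The one device the paper uses that you gesture at but do not commit to is restricting to $\delta \in (0, u^{-c_1})$: with $\delta$ this small, $\lambda\delta^{-c_1} > u = |P_1|$, so one may refine the $P_1$-equipartition into singletons $A^1_j = \{a_j\}$ without violating (c), and then condition (b) for $R$ follows in one line because $(e, j_2, \ldots, j_k) \notin \Sigma$ for each $e \in [u]$ means each $E(\{a_e\}, A^2_{j_2}, \ldots, A^k_{j_k})$ is all or nothing, so the conjunction $R$ is constant on $A^2_{j_2}\times\cdots\times A^k_{j_k}$. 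Your own homogeneity argument for general (non-singleton) cells of $P_1$ is correct — for $(j_2,\ldots,j_k)\notin\Lambda$, $E(a_e, b_2, \ldots, b_k)$ holds iff the cell containing $a_e$ is a clique, independently of $(b_2,\ldots,b_k)$, so $R$ is constant — just slightly longer. Your initial worry that all $a_e$ need to lie in one cell was a red herring, which you correctly dispatched.

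The factor-of-$u$ in the bad-cell mass that you flag is a genuine wrinkle, and it is not an artefact of your route: the paper's own proof of condition (i) applies Definition \ref{tupledefn}(a) to $P_1,\ldots,P_k$ with $|P_1|=u$, which gives $\sum_{\Sigma}|A^1_{j_1}\times\cdots\times A^k_{j_k}| \leq \lambda\delta\,u\,n_2\cdots n_k$, yet the displayed chain silently drops the factor $u$ to conclude $\leq \lambda\delta\,n_2\cdots n_k$. The singleton trick does not fix this; it only makes the \emph{first} inequality in that chain an equality term-by-term. So, strictly read, the lemma should assert coefficient $\lambda u$ (or absorb $u$ by running the $E$-lemma at $\delta/u$ and paying a factor $u^{c_i}$ in (c), as you discuss). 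As you rightly observe, this is immaterial in the only place the lemma is used — the inductive step of Theorem \ref{mainabstractbound}, where the implied constant is already allowed to depend on $u$ — but your instinct to question the literal ``coefficient $\lambda$'' was well placed.
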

\begin{proof}
    Let $P_1:=\{a_1, ..., a_u\}$ for $a_1, ..., a_u\in M^{x_1}$ distinct, and for $2\leq i\leq k$, let $P_i\subseteq M^{x_i}$ be finite with $n_i:=|P_i|$.

    Let $\delta\in (0,u^{-1})$. With this value of $\delta$, obtain equipartitions $P_i=A^i_1\sqcup \cdots\sqcup A^i_{K_i}$ as in the definition of $\bar{c}$ as a strong distal regularity tuple for $E$, and let $\Sigma\subseteq [K_1]\times\cdots \times [K_k]$ be the index set of bad cells. Since $1\geq \delta^{c_1}|P_1|$, we can assume without loss of generality that the partition of $P_1$ is a partition into singletons, and $A^1_j=\{a_j\}$ for all $j\in [u]$.
    
    Henceforth, for readability, a tuple $(j_2, ..., j_k)$ is understood to be taken from $[K_2]\times \cdots\times [K_k]$. Let
    \[\Sigma':=\{(j_2, ..., j_k): \exists j_1\in [u]\;(j_1, ..., j_k)\in\Sigma\}.\]
    We claim that the equipartitions $P_i=A^i_1\sqcup \cdots\sqcup A^i_{K_i}$ (for $2\leq i\leq k$) and the index set $\Sigma'$ of bad cells are such that
    \begin{enumerate}[(i)]
        \item $\sum_{(j_2, ..., j_k)\in \Sigma'}|A^2_{j_2}\times\cdots\times A^k_{j_k}|\leq u\lambda\delta n_2\cdots n_k$;
        \item For all $(j_2, ..., j_k)\not\in\Sigma'$, $A^2_{j_2}\times\cdots \times A^k_{j_k}$ is $R$-homogeneous;
        \item $K_i\leq \lambda \delta^{-c_i}$ for all $2\leq i\leq k$.
    \end{enumerate}

    To see that (i) holds, observe that
    \[\sum_{(j_2, ..., j_k)\in \Sigma'}|A^2_{j_2}\times\cdots\times A^k_{j_k}|\leq \sum_{(j_1, ..., j_k)\in \Sigma}|A^1_{j_1}\times\cdots\times A^k_{j_k}|\leq \lambda\delta un_2\cdots n_k.\]
    
    To see that (ii) holds, let $(j_2, ..., j_k)\not\in\Sigma'$ and $(b_2, ..., b_k)\in A^2_{j_2}\times\cdots\times A^k_{j_k}$. Then
    \[R(b_2, ..., b_k)\Leftrightarrow \bigwedge_{e=1}^u E(a_e, b_2, ..., b_k)\Leftrightarrow \bigwedge_{e=1}^u E(A^1_e, A^2_{j_2}, ..., A^k_{j_k})=A^1_e\times A^2_{j_2}\times\cdots\times A^k_{j_k},\]
    where the last equivalence follows from the fact that, for all $e\in [u]$, $A^1_e=\{a_e\}$ and $(e, j_2, ..., j_k)\not\in\Sigma$. Thus, $A^2_{j_2}\times\cdots \times A^k_{j_k}$ is $R$-homogeneous.

    Finally, (iii) holds by the choice of our original partition. Thus, $(c_2, ..., c_k)$ is a strong distal regularity tuple for $R(x_2, ..., x_k)$ with coefficient $u\lambda$.
\end{proof}
    The following functions appeared in \cite{do}.
\begin{defn}\label{Fdefn}
    For $\bar{c}=(c_1, ..., c_k)\in\R_{\geq 1}^k$, let $E_{\bar{c}}: \R_{\geq 0}^k\to\R$ be the function sending $\bar{n}=(n_1, ..., n_k)\in\R_{\geq 0}^k$ to $E_{\bar{c}}(\bar{n}):=\prod_{i=1}^k n_i^{\gamma_i(\bar{c})}$, where
    \[\gamma_i(\bar{c}):=1-\frac{\frac{1}{c_i-1}}{k-1+\sum_{j=1}^k\frac{1}{c_j-1}}.\]
    Note that, when $k=1$, $E_{\bar{c}}$ is the constant 1-valued function. For $\varepsilon\in\R_{>0}$, if $k\geq 2$ then let $F^\varepsilon_{\bar{c}}:\R_{\geq 0}^k\to\R$ be the function sending $\bar{n}=(n_1, ..., n_k)\in\R_{\geq 0}^k$ to
    \[F^\varepsilon_{\bar{c}}(\bar{n}):=\sum_{I\subseteq [k], |I|\geq 2}E_{\bar{c}_I}(\bar{n}_I)\prod_{i\in I}n_i^\varepsilon\prod_{i\in [k]\setminus I}n_i+\sum_{j=1}^k \prod_{i\in [k]\setminus \{j\}}n_i,\]
    and if $k=1$ then let $F^\varepsilon_{\bar{c}}:\R_{\geq 0}\to\R$ be the constant $1$-valued function. (Recall the notation of $\bar{c}_I$, $\bar{n}_I$ from Sub-subsection \ref{subsubsecindexing}.)
\end{defn}
As written, the exponents in $E_{\bar{c}}(\bar{n})$ are not well-defined when $c_j=1$ for some $j$. In this case, we circumvent this problem by declaring, for all $i\in [k]$,
\[\gamma_i(\bar{c}):=1-\lim_{\delta\to 0}\frac{\frac{1}{c_i+\delta-1}}{k-1+\sum_{j=1}^k\frac{1}{c_j+\delta-1}}=1-\frac{\mathbbm{1}(c_i=1)}{|\{j\in [k]: c_j=1\}|}.\]

Note that, when $k=2$,
\begin{align*}
    F^\varepsilon_{\bar{c}}(m,n)&=m^{1-\frac{\frac{1}{c_1-1}}{1+\frac{1}{c_1-1}+\frac{1}{c_2-1}}+\varepsilon}n^{1-\frac{\frac{1}{c_2-1}}{1+\frac{1}{c_1-1}+\frac{1}{c_2-1}}+\varepsilon}+m+n\\
    &=m^{1-\frac{c_2-1}{c_1c_2-1}+\varepsilon}n^{1-\frac{c_1-1}{c_1c_2-1}+\varepsilon}+m+n\\
    &=m^{\frac{c_2(c_1-1)}{c_1c_2-1}+\varepsilon}n^{\frac{c_1(c_2-1)}{c_1c_2-1}+\varepsilon}+m+n,
\end{align*}
so $F^\varepsilon_{\bar{c}}(m,n)$ is the bound appearing in Theorem \ref{graphs}.
\begin{remark}\label{inductiveremark}
    It is straightforward to observe that, when $k\geq 2$,
    \[kF^\varepsilon_{\bar{c}}(\bar{n})\geq E_{\bar{c}}(\bar{n})\prod_{i=1}^k n_i^\varepsilon+\sum_{i=1}^k n_i F^\varepsilon_{\bar{c}_{\neq i}}(\bar{n}_{\neq i}).\]
    The following lemma says that (in most cases) the dominant term in $F^\varepsilon_{\bar{c}}(\bar{n})$ is $E_{\bar{c}}(\bar{n})\prod_{i=1}^k n_i^\varepsilon$.
\end{remark}
\begin{lemma}\label{dumplemma}
    Let $k\geq 2$, $\bar{c}=(c_1, ..., c_k)\in\R_{\geq 1}^k$, $\varepsilon>0$, and $\bar{n}=(n_1, ..., n_k)\in\R_{\geq 0}^k$. Suppose that, for all $i\in [k]$,
    \[n_1\cdots n_kn_i^{-1/c_i+\varepsilon}\geq n_iF^\varepsilon_{\bar{c}_{\neq i}}(\bar{n}_{\neq i}).\]
    Then $F^\varepsilon_{\bar{c}}(\bar{n})\ll_{\bar{c},\varepsilon} E_{\bar{c}}(\bar{n})\prod_{i=1}^k n_i^\varepsilon$, and so $n_i F^\varepsilon_{\bar{c}_{\neq i}}(\bar{n}_{\neq i})\ll_{\bar{c},\varepsilon} E_{\bar{c}}(\bar{n})\prod_{i=1}^k n_i^\varepsilon$ for all $i\in [k]$ by Remark \ref{inductiveremark}.
\end{lemma}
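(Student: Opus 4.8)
The plan is to induct on $k$, treating the base case $k=2$ by hand. Write $\beta_j:=\frac{1}{c_j-1}$ and $S:=(k-1)+\sum_{j=1}^k\beta_j$, so that $\gamma_j(\bar c)=1-\beta_j/S$; one may assume every $c_j>1$ (the remaining case following by continuity as $c_j\to1^+$) and every $n_j\ge1$. The engine is the counterpart of Remark~\ref{inductiveremark},
\[
F^\varepsilon_{\bar c}(\bar n)\;\le\;E_{\bar c}(\bar n)\prod_{i=1}^k n_i^\varepsilon\;+\;\sum_{i=1}^k n_i\,F^\varepsilon_{\bar c_{\neq i}}(\bar n_{\neq i}),
\]
which holds because each monomial in the defining sum of $F^\varepsilon_{\bar c}(\bar n)$ other than the ``full'' term $E_{\bar c}(\bar n)\prod_i n_i^\varepsilon$ is, verbatim, a monomial of $n_\ell F^\varepsilon_{\bar c_{\neq\ell}}(\bar n_{\neq\ell})$ for any index $\ell$ outside the subset indexing that monomial. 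So it suffices to bound each $n_i F^\varepsilon_{\bar c_{\neq i}}(\bar n_{\neq i})$ by $O_{\bar c,\varepsilon}\!\big(E_{\bar c}(\bar n)\prod_i n_i^\varepsilon\big)$.

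For the inductive step ($k\ge3$) I would fix $i$ and first check that the hypotheses are inherited: the $(k-1)$-tuple $\bar c_{\neq i}$, with data $\bar n_{\neq i}$, again satisfies the hypotheses of the lemma for the same $\varepsilon$. This rests on the sub-summation bound $F^\varepsilon_{\bar c_{\neq\ell}}(\bar n_{\neq\ell})\ge n_i\,F^\varepsilon_{\bar c_{\neq\ell,i}}(\bar n_{\neq\ell,i})$ for $i\neq\ell$, where $\bar c_{\neq\ell,i}$ denotes $\bar c$ with its $\ell$th and $i$th coordinates deleted: every monomial of $F^\varepsilon_{\bar c_{\neq\ell}}(\bar n_{\neq\ell})$ whose index avoids $i$ carries a factor $n_i$, and dividing these by $n_i$ exhausts $F^\varepsilon_{\bar c_{\neq\ell,i}}(\bar n_{\neq\ell,i})$; dividing the hypothesis of index $\ell$ for $\bar c$ by $n_i$ then gives exactly the hypothesis of index $\ell$ for $\bar c_{\neq i}$. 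Granting this, the inductive hypothesis yields $F^\varepsilon_{\bar c_{\neq i}}(\bar n_{\neq i})\ll_{\bar c,\varepsilon}E_{\bar c_{\neq i}}(\bar n_{\neq i})\prod_{j\neq i}n_j^\varepsilon$, so the remaining task is the key estimate
\[
n_i\,E_{\bar c_{\neq i}}(\bar n_{\neq i})\prod_{j\neq i}n_j^\varepsilon\;\le\;E_{\bar c}(\bar n)\prod_{i=1}^k n_i^\varepsilon.
\]

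For the key estimate I would pass to logarithms, writing $t_j:=\log n_j\ge0$ and $S':=S-1-\beta_i$ (the value of $S$ attached to $\bar c_{\neq i}$); the inequality becomes $(\tfrac{\beta_i}{S}-\varepsilon)t_i\le\tfrac{1+\beta_i}{SS'}\sum_{j\neq i}\beta_j t_j$, which is immediate when $\tfrac{\beta_i}{S}-\varepsilon\le0$. Otherwise, note $\varepsilon<\beta_i/S<1/c_i$ and apply the hypothesis of index $i$ to the largest (``full'') monomial $E_{\bar c_{\neq i}}(\bar n_{\neq i})\prod_{j\neq i}n_j^\varepsilon$ of $F^\varepsilon_{\bar c_{\neq i}}(\bar n_{\neq i})$; after discarding a non-positive term this gives $(1/c_i-\varepsilon)t_i\le\tfrac1{S'}\sum_{j\neq i}\beta_j t_j$, hence $t_i\le\tfrac1{(1/c_i-\varepsilon)S'}\sum_{j\neq i}\beta_j t_j$, and substituting reduces the claim to $\tfrac{\beta_i/S-\varepsilon}{1/c_i-\varepsilon}\le\tfrac{1+\beta_i}{S}$, i.e., using $(1+\beta_i)/c_i=\beta_i$, to $S\ge1+\beta_i$ — which is clear. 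Summing over $i$ and feeding back into the decomposition completes the step.

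The base case $k=2$ is elementary: $F^\varepsilon_{\bar c}(n_1,n_2)=E_{\bar c}(n_1,n_2)n_1^\varepsilon n_2^\varepsilon+n_1+n_2$, the hypotheses read $n_j\ge n_i^{1/c_i-\varepsilon}$ for $\{i,j\}=\{1,2\}$, and writing $n_i=n_i^{\gamma_i(\bar c)+\varepsilon}\,n_i^{1-\gamma_i(\bar c)-\varepsilon}$ one bounds the second factor by $1$ if $1-\gamma_i(\bar c)-\varepsilon\le0$ and by $n_j^{\gamma_j(\bar c)+\varepsilon}$ otherwise, the latter following from the constraint and the elementary inequality $\varepsilon\le1+\tfrac1{c_i}-\gamma_j(\bar c)$, valid since $\varepsilon<1-\gamma_i(\bar c)=\beta_i/S\le1+\tfrac1{c_i}-\gamma_j(\bar c)$. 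I expect the key estimate to be the crux. The tempting shortcut — bounding $n_i F^\varepsilon_{\bar c_{\neq i}}(\bar n_{\neq i})$ via the hypothesis directly by $n_1\cdots n_k\,n_i^{-1/c_i+\varepsilon}$ and comparing \emph{that} with $E_{\bar c}(\bar n)\prod_i n_i^\varepsilon$ — fails, since on the feasible region that ratio is genuinely unbounded; one is forced to route through the inductive bound on $F^\varepsilon_{\bar c_{\neq i}}$ and exploit the \emph{strong} constraint coming from its full $(k-1)$-variable monomial, as the weaker consequences $n_b\ge n_a^{1/c_a-\varepsilon}$ of the hypotheses, adequate when $k=2$, no longer suffice once $k\ge3$.
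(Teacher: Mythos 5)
Your proof is correct, but it takes a genuinely different route from the paper's. The paper reduces directly to showing $E_{\bar c}(\bar n)\gg_{\bar c,\varepsilon}E_{\bar c_I}(\bar n_I)\prod_{i\notin I}n_i^{1-\varepsilon}$ for every nonempty $I\subseteq[k]$, then proves this by a \emph{downward internal induction on $|I|$}, with each step powered by a claim asserting the existence of some $\nu=\nu(\bar c)>0$ such that the hypothesis at index $j$ forces $n_j^{\nu\varepsilon}E_{\bar c_J}(\bar n_J)\geq n_jE_{\bar c_I}(\bar n_I)$ for $I=J\setminus\{j\}$; the paper leaves $\nu$ abstract. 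You instead induct on the arity $k$, which requires a supplementary observation not present in the paper — your sub-summation bound $F^\varepsilon_{\bar c_{\neq\ell}}(\bar n_{\neq\ell})\geq n_iF^\varepsilon_{\bar c_{\neq\ell,i}}(\bar n_{\neq\ell,i})$, showing the lemma's hypotheses are inherited by the sub-tuple $\bar c_{\neq i}$ — after which you need only the single ``key estimate'' $n_iE_{\bar c_{\neq i}}(\bar n_{\neq i})\leq n_i^\varepsilon E_{\bar c}(\bar n)$, which is exactly the paper's Claim with $J=[k]$ and, notably, with $\nu=1$ made explicit (your computation boiling it down to $S\geq 1+\beta_i$ is the concrete content the paper elides into ``for some $\nu(\bar c)>0$''). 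Your approach buys explicitness and a cleaner structure at the cost of a separate $k=2$ base case and the inheritance check; the paper's internal downward induction avoids inheritance entirely by never leaving the ambient $\bar c$. You also correctly diagnose why the naive shortcut through $n_1\cdots n_k n_i^{-1/c_i+\varepsilon}$ fails, which is a useful sanity check absent from the paper.
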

\begin{proof}
    Our proof mimics, in part, the proof of \cite[Lemma 2.10]{do}. To show that $F^\varepsilon_{\bar{c}}(\bar{n})\ll_{\bar{c},\varepsilon} E_{\bar{c}}(\bar{n})\prod_{i=1}^k n_i^\varepsilon$, it suffices to show that, for all $\emptyset\neq I\subseteq [k]$,
    \begin{equation}\label{dominanteqn}
        E_{\bar{c}}(\bar{n})\gg_{\bar{c},\varepsilon} E_{\bar{c}_I}(\bar{n}_I)\prod_{i\not\in I}n_i^{1-\varepsilon}.
    \end{equation}
    We prove this by downward induction on $|I|\in [k]$ via the following claim.
    \renewcommand\qedsymbol{$\dashv$}
    \begin{claim}\label{dominantclaim}
        Let $J\subseteq [k]$ with $|J|\geq 2$. Let $j\in J$, and write $I:=J\setminus \{j\}$. For all $\varepsilon>0$, if $n_j^{-1/c_j+\varepsilon}\prod_{i\in J}n_i\geq n_jE_{\bar{c}_I}(\bar{n}_I)$ then $E_{\bar{c}_J}(\bar{n}_J)\geq n_j^{1-\varepsilon} E_{\bar{c}_I}(\bar{n}_I)$.
    \end{claim}
    \begin{proof}[Proof of Claim]
        Throughout this argument, if necessary, replace each $c_i$ with $c_i+\delta$ and take the limit as $\delta\to 0$.
        
        Let $\varepsilon>0$, and suppose $n_j^{-1/c_j+\varepsilon}\prod_{i\in J}n_i\geq n_jE_{\bar{c}_I}(\bar{n}_I)$. Then
        \[\prod_{i\in I}n_i^{\frac{\frac{1}{c_i-1}}{|J|-2+\sum_{l\in I}\frac{1}{c_l-1}}}\geq n_j^{\frac{1}{c_j}-\varepsilon}.\]
        The $i^{\text{th}}$ exponent on the left equals $\frac{c_j-1}{c_j}\frac{1}{c_i-1}\left(\frac{|J|-1+\sum_{l\in J} \frac{1}{c_l-1}}{|J|-2+\sum_{l\in I}\frac{1}{c_l-1}}-1\right)$, and so
        \[\prod_{i\in I}n_i^{\frac{1}{c_i-1}\left(\frac{1}{|J|-2+\sum_{l\in I}\frac{1}{c_l-1}}-\frac{1}{|J|-1+\sum_{l\in J}\frac{1}{c_l-1}}\right)}\geq n_j^{\frac{\frac{1}{c_j-1}}{|J|-1+\sum_{l\in J}\frac{1}{c_l-1}}-\nu\varepsilon}\]
        for $\nu:=\frac{c_j/(c_j-1)}{|J|-1+\sum_{l\in J} \frac{1}{c_l-1}}\in [0,1]$. Rearranging, $E_{\bar{c}_J}(\bar{n}_J)\geq n_j^{1-\nu\varepsilon} E_{\bar{c}_I}(\bar{n}_I)\geq n_j^{1-\varepsilon} E_{\bar{c}_I}(\bar{n}_I)$.
    \end{proof}
    \renewcommand\qedsymbol{$\square$}
    We now prove (\ref{dominanteqn}) by downward induction on $\abs{I}\in [k]$. Since $k$ is finite, we may update the implied constant in each step of the induction. When $\abs{I}=k$, we have $I=[k]$ so (\ref{dominanteqn}) holds trivially. Now suppose $\abs{I}<k$, so we may fix $j\not\in I$; write $J:=I\cup\{j\}$. Since $|I|\geq 1$, we have $|J|\geq 2$. By the induction hypothesis, $E_{\bar{c}}(\bar{n})\gg_{\bar{c},\varepsilon} E_{\bar{c}_J}(\bar{n}_J)\prod_{i\not\in J}n_i^{1-\varepsilon}$.

    By assumption, $n_1\cdots n_kn_j^{-1/c_j+\varepsilon}\geq n_jF^\varepsilon_{\bar{c}_{\neq j}}(\bar{n}_{\neq j})\geq E_{\bar{c}_I}(\bar{n}_I)\prod_{i\not\in I}n_i$, which rearranges to $n_j^{-1/c_j+\varepsilon}\prod_{i\in J}n_i\geq n_jE_{\bar{c}_I}(\bar{n}_I)$, and hence $E_{\bar{c}_J}(\bar{n}_J)\geq n_j^{1-\varepsilon} E_{\bar{c}_I}(\bar{n}_I)$ by the Claim. Thus,
    \[E_{\bar{c}}(\bar{n})\gg_{\bar{c},\varepsilon} E_{\bar{c}_J}(\bar{n}_J)\prod_{i\not\in J}n_i^{1-\varepsilon}\geq E_{\bar{c}_I}(\bar{n}_I)\prod_{i\not\in I}n_i^{1-\varepsilon}\]
    as required.
\end{proof}
\begin{theorem}\label{mainabstractbound}
    Let $E(x_1, ..., x_k)$ be a relation on a set $M$, with strong distal regularity tuple $\bar{c}=(c_1, ..., c_k)\in\R_{\geq 1}^k$ and coefficient $\lambda$. For all finite $P_i\subseteq M^{x_i}$ with $n_i:=|P_i|$, if $E(P_1, ..., P_k)$ is $K_{u, ..., u}$-free, then for all $\varepsilon>0$,
    \[|E(P_1, ..., P_k)|\ll_{u,\bar{c},\lambda,\varepsilon}F^\varepsilon_{\bar{c}}(n_1, ..., n_k).\]
\end{theorem}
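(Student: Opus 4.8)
The plan is to argue by induction on the arity $k$. The base case $k=1$ is trivial, since a $K_u$-free $1$-graph has fewer than $u$ edges while $F^\varepsilon_{\bar c}\equiv 1$; the case $k=2$ is the already-proven Theorem \ref{graphs} (strong distal regularity tuples are in particular distal regularity tuples). So suppose $k\ge 2$ and the theorem holds for arity $k-1$. By Lemma \ref{tupleinduction}, for each $i\in[k]$ and any $u$ points the slice relation $\bigwedge_{e=1}^u E(\ldots,a_e,\ldots)$ (in coordinate $i$) again has a strong distal regularity tuple, namely $\bar c_{\neq i}$, with the same coefficient $\lambda$; by the arity induction, whenever such a slice is $K_{u,\ldots,u}$-free its size is $\ll_{u,\bar c,\lambda,\varepsilon}F^\varepsilon_{\bar c_{\neq i}}(\bar n_{\neq i})$. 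Thus the hypothesis of Lemma \ref{prelimbound} is met with $F_i:=F^\varepsilon_{\bar c_{\neq i}}$, and Lemma \ref{prelimbound}(ii) gives, for every $i\in[k]$ and every $\varepsilon>0$,
\[
|E(P_1,\ldots,P_k)|\ll_{u,\bar c,\lambda,\varepsilon} n_1\cdots n_k\,n_i^{-1/c_i+\varepsilon}+n_i F^\varepsilon_{\bar c_{\neq i}}(\bar n_{\neq i}).
\]

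I would then distinguish two regimes. If for some $i$ the second term dominates, i.e.\ $n_1\cdots n_k\,n_i^{-1/c_i+\varepsilon}\le n_iF^\varepsilon_{\bar c_{\neq i}}(\bar n_{\neq i})$, then $|E(P_1,\ldots,P_k)|\ll n_iF^\varepsilon_{\bar c_{\neq i}}(\bar n_{\neq i})\ll F^\varepsilon_{\bar c}(\bar n)$ by Remark \ref{inductiveremark}, and we are done (a little bookkeeping with $\varepsilon$ versus $\varepsilon/2$ aligns the various hypotheses, but this is routine). Otherwise, for every $i$ we have $n_1\cdots n_k\,n_i^{-1/c_i+\varepsilon}\ge n_iF^\varepsilon_{\bar c_{\neq i}}(\bar n_{\neq i})$, so the hypothesis of Lemma \ref{dumplemma} holds: $F^\varepsilon_{\bar c}(\bar n)\asymp_{\bar c,\varepsilon}E_{\bar c}(\bar n)\prod_i n_i^\varepsilon$ and moreover $n_iF^\varepsilon_{\bar c_{\neq i}}(\bar n_{\neq i})\ll E_{\bar c}(\bar n)\prod_i n_i^\varepsilon$ for every $i$. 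So in this regime it suffices to prove $|E(P_1,\ldots,P_k)|\ll E_{\bar c}(\bar n)\prod_i n_i^\varepsilon$.

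For this last bound I would run the machine from the proof of Theorem \ref{graphs}, now with the $k$-ary strong distal regularity lemma, inducting on $n_1+\cdots+n_k$. Fix a small $\delta=\delta(\bar c,\lambda,\varepsilon)$; take equipartitions $P_i=A^i_1\sqcup\cdots\sqcup A^i_{K_i}$ with $K_i\le\lambda\delta^{-c_i}$, refined so that $|A^i_j|\le\delta^{c_i}n_i$, together with the bad-cell set $\Sigma$. Partition $P_1$ into $H_1$ (cells of size $<u$, so $|H_1|=O_{u,\bar c,\lambda}(1)$), $H_2$ (cells $A^1_j$ below which the bad mass $\sum_{(j,j_2,\ldots,j_k)\in\Sigma}|A^2_{j_2}\times\cdots\times A^k_{j_k}|$ exceeds $\delta^{1-\varepsilon}n_2\cdots n_k$, of which there are few by meagreness, so $|H_2|\ll\delta^{\varepsilon}n_1$), and $H_3$ (the rest). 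Bound $|E(H_1,P_2,\ldots,P_k)|$ and $|E(H_2,P_2,\ldots,P_k)|$ by the induction hypothesis on $n_1+\cdots+n_k$. For $H_3$, bound the homogeneous part by slicing exactly as in Lemma \ref{prelimbound}: for $A^1_j\subseteq H_3$, $u$ distinct points of $A^1_j$ confine the ``full'' cells above them inside a $K_{u,\ldots,u}$-free $(k-1)$-graph $\bigwedge_{e=1}^u E(a_e,P_2,\ldots,P_k)$, which by the arity induction has size $\ll F^\varepsilon_{\bar c_{\neq 1}}(\bar n_{\neq 1})$; summing over $j$ gives $\ll n_1F^\varepsilon_{\bar c_{\neq 1}}(\bar n_{\neq 1})\ll E_{\bar c}(\bar n)\prod_i n_i^\varepsilon$, using the regime from Lemma \ref{dumplemma}. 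Finally, bound the remaining (bad) part of $E(H_3,P_2,\ldots,P_k)$ by applying the induction hypothesis on $n_1+\cdots+n_k$ to each box $A^1_{j_1}\times\cdots\times A^k_{j_k}$ with $(j_1,\ldots,j_k)\in\Sigma$ — each coordinate of which has shrunk by a factor $\delta^{c_i}$ — and summing. As in Theorem \ref{graphs}, the exponents $\gamma_i(\bar c)$ are defined precisely so that the ``$\delta^0$'' part of this accumulated sum cancels, leaving only a factor $\delta^{\varepsilon\cdot(\mathrm{positive})}$, which one makes smaller than the reciprocal of the number of terms by choosing $\delta$ small; one then closes the induction by choosing, in this order, $\delta$ small, the base-case threshold $m_0$ large, $\beta$ large, and the leading constant large.

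The step I expect to be the main obstacle is this last bad-cell estimate for $k\ge 3$. Unlike in the binary case, the bad region sitting below a fixed cell $A^1_{j_1}$ is a union of product boxes rather than a single product set, so it cannot be treated as one $k$-ary sub-instance; one must recurse box-by-box and simultaneously exploit (i) the meagreness $\sum_\Sigma|A^1_{j_1}\times\cdots\times A^k_{j_k}|\le\lambda\delta\,n_1\cdots n_k$, (ii) the refinement $|A^i_j|\le\delta^{c_i}n_i$, and (iii) the exact arithmetic identity satisfied by the $\gamma_i(\bar c)$ (the $k$-variable analogue of $c_1\gamma_1+\gamma_2=c_1$ from Theorem \ref{graphs}), so that the resulting power of $\delta$ emerges with a strictly positive, $\varepsilon$-proportional exponent. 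The secondary difficulty is keeping the many lower-order terms of $F^\varepsilon_{\bar c}$ — the subsets $I\subsetneq[k]$ and the $\tfrac1{n_i}\prod_j n_j$ corrections — dominated throughout, which is exactly the work done by the regime reduction together with Lemma \ref{dumplemma}.
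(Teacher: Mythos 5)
Your high-level architecture matches the paper's: double induction (on arity $k$, then on $n_1+\cdots+n_k$), Lemma \ref{tupleinduction} plus the arity induction to feed Lemma \ref{prelimbound}, a regime split, Lemma \ref{dumplemma} to reduce the target to $E_{\bar c}(\bar n)\prod_i n_i^\varepsilon$, and slicing to handle the homogeneous cells. (Two cosmetic simplifications you miss: because the partitions are equipartitions, every $A^1_{j_1}$ has size at least $n_1/K_1-1>u$ once $n_1\ge m_0$, so there is no $H_1$; and the bad mass above a single $A^1_{j_1}$ is never thresholded, so there is no $H_2$ either.)

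However, the step you flag as the main obstacle is indeed a genuine gap, and the box-by-box plan you propose for the bad cells does not close. If you apply the internal induction hypothesis separately to each bad box $A^1_{j_1}\times\cdots\times A^k_{j_k}$ (each side $\le\delta^{c_i}n_i$) and sum over the $\abs{\Sigma}\ll_{\bar c,\lambda}\delta^{1-\sum_i c_i}$ bad cells, the main term accumulates to $\delta^{1-\sum_i c_i(1-\gamma_i)+\varepsilon\sum_i c_i}E_{\bar c}(\bar n)\prod_i n_i^\varepsilon$, and one computes
\[
1-\sum_{i=1}^k c_i\bigl(1-\gamma_i(\bar c)\bigr)=-\frac{1}{\,k-1+\sum_{j=1}^k\frac{1}{c_j-1}\,}<0.
\]
So the $\delta$-exponent is strictly negative for small $\varepsilon$; the ``$\delta^0$ part'' does \emph{not} cancel box-by-box, and the bound blows up as $\delta\to 0$. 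The paper resolves this by aggregating in the other direction: permute so that $\gamma_1(\bar c)<1$, then for each $(j_2,\ldots,j_k)$ set $B_{j_2,\ldots,j_k}:=\bigcup\{A^1_{j_1}:(j_1,\ldots,j_k)\in\Sigma\}$, so the bad region is the disjoint union of genuine product boxes $B_{j_2,\ldots,j_k}\times A^2_{j_2}\times\cdots\times A^k_{j_k}$. Apply the internal induction hypothesis to each such box, then use H\"older's inequality with exponent $\gamma_1+\varepsilon\in(0,1)$ on $\sum_{(j_2,\ldots,j_k)} s_{j_2,\ldots,j_k}^{\,\gamma_1+\varepsilon}$, combining the total mass $\sum s_{j_2,\ldots,j_k}\ll\delta^{1-(c_2+\cdots+c_k)}n_1$ (from meagreness and the equipartition lower bound $\abs{A^i_{j_i}}\gtrsim\delta^{c_i}n_i$) with the count $\prod_{i\ge2}K_i\ll\delta^{-(c_2+\cdots+c_k)}$. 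The identity $\gamma_1+\sum_{i\ge2}c_i\gamma_i=\sum_{i\ge2}c_i$ then makes the $\delta$-exponent exactly $\varepsilon(1+c_2+\cdots+c_k)>0$, which is what closes the induction. Without this regrouping and the H\"older step, the argument fails.
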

\begin{proof}
    We will do a double induction: first on $k$, and then on $n_1+\cdots+n_k$. When $k=1$ this is trivial. Let $k\geq 2$, and suppose for all $l<k$ that the statement holds. Writing $\gamma_j:=\gamma_j(\bar{c})$ for $j\in [k]$, there is some $j\in [k]$ such that $\gamma_j<1$, so, permuting $x_1, ..., x_k$ if necessary, we may assume that $\gamma_1<1$. Let $\varepsilon>0$. We will show that there are $\alpha=\alpha(u,\bar{c},\lambda,\varepsilon)$ and $\beta=\beta(u,\bar{c},\lambda,\varepsilon)$ such that, for all finite $P_i\subseteq M^{x_i}$ with $n_i:=|P_i|$, if $E(P_1, ..., P_k)$ is $K_{u, ..., u}$-free, then
    \begin{equation}\label{mainboundalphabeta}
        |E(P_1, ..., P_k)|\leq \alpha E_{\bar{c}}(\bar{n})\prod_{i=1}^k n_i^\varepsilon+\beta \sum_{i=1}^k n_i F^\varepsilon_{\bar{c}_{\neq i}}(\bar{n}_{\neq i}).
    \end{equation}
    By Remark \ref{inductiveremark}, this is sufficient for the inductive step. The dependency between constants will be as follows:
    \begin{enumerate}[(i)]
        \item $\tau$ is sufficiently large in terms of $\bar{c}$ and $\varepsilon$;
        \item $\delta$ is sufficiently small in terms of $\bar{c}$, $\lambda$, and $\varepsilon$;
        \item $m_0$ is sufficiently large in terms of $u$, $\bar{c}$, $\lambda$, $\varepsilon$, and $\delta$;
        \item $\beta$ is sufficiently large in terms of $m_0$, $u$, $\bar{c}$, $\lambda$, and $\varepsilon$; and
        \item $\alpha$ is sufficiently large in terms of $m_0$, $\beta$, $\bar{c}$, $\lambda$, $\delta$, and $\tau$.
    \end{enumerate}

    Suppose there is $i\in [k]$ such that $n_1\cdots n_kn_i^{-1/c_i+\varepsilon}< n_iF^\varepsilon_{\bar{c}_{\neq i}}(\bar{n}_{\neq i})$. Then, by Lemma \ref{prelimbound}(ii), Lemma \ref{tupleinduction}, and the induction hypothesis, if $\beta$ is sufficiently large in terms of $u$, $\bar{c}$, $\lambda$, and $\varepsilon$, then
    \[|E(P_1, ..., P_k)|\leq \frac{\beta}{2}\left(n_1\cdots n_kn_i^{-\frac{1}{c_i}+\varepsilon}+n_iF^\varepsilon_{\bar{c}_{\neq i}}(\bar{n}_{\neq i})\right)<\beta n_iF^\varepsilon_{\bar{c}_{\neq i}}(\bar{n}_{\neq i}).\]
    Therefore, henceforth we suppose $n_1\cdots n_kn_i^{-1/c_i+\varepsilon}\geq n_iF^\varepsilon_{\bar{c}_{\neq i}}(\bar{n}_{\neq i})$ for all $i\in [k]$, whence by Lemma \ref{dumplemma} there is $\tau=\tau(\bar{c},\varepsilon)$ such that, for all $i\in [k]$,
    \begin{equation}\label{betatoalpha}
        n_i F^\varepsilon_{\bar{c}_{\neq i}}(\bar{n}_{\neq i})\leq \tau E_{\bar{c}}(\bar{n})\prod_{i=1}^k n_i^\varepsilon.
    \end{equation}

    Let $\alpha=\alpha(u,\bar{c},\lambda,\varepsilon)$ and $\beta=\beta(u,\bar{c},\lambda,\varepsilon)$ be sufficiently large, to be chosen later. We will show by induction on $n_1+\cdots+n_k$ that (\ref{mainboundalphabeta}) holds.

    Let $m_0\in\N$ such that $m_0>\lambda\delta^{-c_i}(u+1)$ for all $i\in [k]$. If $i\in [k]$ is such that $n_i<m_0$, then for all $i\neq j\in [k]$,
    \[|E(P_1, ..., P_k)|<m_0n_1\cdots n_kn_i^{-1}\leq \beta n_1\cdots n_kn_i^{-1}\leq \beta n_jF^\varepsilon_{\bar{c}_{\neq j}}(n_{\neq j}),\]
    assuming $\beta\geq m_0$. Thus, (\ref{mainboundalphabeta}) holds when $n_1+\cdots+n_k<km_0$, and we henceforth suppose $n_i\geq m_0$ for all $i\in [k]$.

    For $\delta=\delta(\bar{c}, \lambda,\varepsilon)<\frac{1}{4}$ to be chosen later, obtain equipartitions $P_i=A^i_1\sqcup \cdots \sqcup A^i_{K_i}$ as in the definition of $\bar{c}$ as a strong distal regularity tuple, with $\Sigma\subseteq [K_1]\times \cdots\times [K_k]$ the index set of bad cells. By refining the partitions if necessary, we may assume that $|A^i_{j}|\leq 4\delta^{c_i}n_i$ for all $i\in [k]$ and $j\in [K_i]$.
    
    Henceforth, for readability, a tuple $(j_i, ..., j_k)$ is understood to be taken from $[K_i]\times \cdots\times [K_k]$. Let $I_1:=\sum_{(j_1, ..., j_k)\not\in\Sigma}|E(A^1_{j_1}, ..., A^k_{j_k})|$ and $I_2:=\sum_{(j_1, ..., j_k)\in\Sigma}|E(A^1_{j_1}, ..., A^k_{j_k})|$, so that $\abs{E(P_1, ..., P_k)}=I_1+I_2$. We bound $I_1$ and $I_2$.
    
    First, consider $I_1$. For $j_1\in [K_1]$, let $\Sigma_{j_1}:=\{(j_2, ..., j_k): (j_1, ..., j_k)\in\Sigma\}$, so we have
    \[I_1=\sum_{j_1=1}^{K_1} \sum_{(j_2, ..., j_k)\not\in \Sigma_{j_1}}|E(A^1_{j_1}, ..., A^k_{j_k})|.\]
    
    Fix $j_1\in [K_1]$. We have that $|A^1_{j_1}|\geq n_1/K_1-1\geq m_0/(\lambda\delta^{-c_1})-1> u$, so we can fix distinct $a_1, ..., a_u\in A^1_{j_1}$. For $(j_2, ..., j_k)\not\in \Sigma_{j_1}$, $E(A^1_{j_1}, ..., A^k_{j_k})=A^1_{j_1}\times\cdots\times A^k_{j_k}$ or $\emptyset$, and thus
    \[\bigcup_{(j_2, ..., j_k)\not\in \Sigma_{j_1}} E(A^1_{j_1}, ..., A^k_{j_k})\subseteq A^1_{j_1}\times \bigcap_{e=1}^u E(a_e, P_2, ..., P_k).\]
    Now $\bigcap_{e=1}^u E(a_e, P_2, ..., P_k)$ is the induced $(k-1)$-subgraph on $P_2\times \cdots\times P_k$ of the relation $R(x_2, ..., x_k):=\bigwedge_{e=1}^u E(a_e, x_2, ..., x_k)$ on $M$. By Lemma \ref{tupleinduction}, $(c_2, ..., c_k)$ is a strong distal regularity tuple for $R$ with coefficient $u\lambda$. By the induction hypothesis,
    \[\abs{\bigcap_{e=1}^u E(a_e, P_2, ..., P_k)}\ll_{u,\bar{c},\lambda,\varepsilon} F^\varepsilon_{\bar{c}_{\neq 1}}(\bar{n}_{\neq 1}).\]

    Choosing $\beta$ sufficiently large in terms of $u$, $\bar{c}$, $\lambda$, and $\varepsilon$, we can assume that the implied constant is at most $\beta$. Then
    \[I_1\leq \sum_{j_1=1}^{K_1} \beta |A^1_{j_1}|F^\varepsilon_{\bar{c}_{\neq 1}}(\bar{n}_{\neq 1})\leq \beta n_1 F^\varepsilon_{\bar{c}_{\neq 1}}(\bar{n}_{\neq 1}).\]

    Next, consider $I_2$. For each $(j_2, ..., j_k)$, let $B_{j_2, ..., j_k}:=\bigcup_{\substack{1\leq j_1\leq K_1\\(j_2, ..., j_k)\in \Sigma_{j_1}}}A^1_{j_1}$, so we have
    \[I_2=\sum_{(j_2, ..., j_k)}\abs{E(B_{j_2, ..., j_k}, A^2_{j_2}, ..., A^k_{j_k})}.\]
    For each $(j_2, ..., j_k)$, let $s_{j_2, ..., j_k}:=|B_{j_2, ..., j_k}|$. Observe that
    \[\sum_{(j_2, ..., j_k)}s_{j_2, ..., j_k}=\sum_{j_1=1}^{K_1}|A^1_{j_1}| |\Sigma_{j_1}|\leq 4\delta^{c_1}n_1 \sum_{j_1=1}^{K_1}|\Sigma_{j_1}|\leq 4\delta^{c_1}n_1|\Sigma|.\]
    Now observe that
    \[|\Sigma|\leq\frac{\sum_{(j_1, ..., j_k)\in\Sigma}|A^1_{j_1}\timesdots A^k_{j_k}|}{\min_{(j_1, ..., j_k)\in\Sigma}|A^1_{j_1}\timesdots A^k_{j_k}|}\leq \frac{\lambda\delta n_1\cdots n_k}{\prod_{i=1}^k \frac{n_i}{2K_i}}\leq \frac{\lambda\delta n_1\cdots n_k}{\prod_{i=1}^k \frac{1}{2\lambda}\delta^{c_i}n_i}=2^k\lambda^{k+1}\delta^{1-(c_1+\cdots+c_k)},\]
    and so
    \[\sum_{(j_2, ..., j_k)}s_{j_2, ..., j_k}\leq 2^{k+2}\lambda^{k+1}\delta^{1-(c_2+\cdots+c_k)}n_1.\]

    By the induction hypothesis,
    \begin{align*}
    I_2&=\sum_{(j_2, ..., j_k)}\abs{E(B_{j_2, ..., j_k}, A^2_{j_2}, ..., A^k_{j_k})}\\
    &\leq \alpha\sum_{(j_2, ..., j_k)} E_{\bar{c}}(s_{j_2, ..., j_k},4\delta^{c_2}n_2, ..., 4\delta^{c_k}n_k)s_{j_2, ..., j_k}^\varepsilon\prod_{i=2}^k (4\delta^{c_i}n_i)^{\varepsilon}+\beta\sum_{(j_2, ..., j_k)}\sum_{i=1}^k n_iF^\varepsilon_{\bar{c}_{\neq i}}(\bar{n}_{\neq i})\\
    &\leq \alpha\prod_{i=2}^k (4\delta^{c_i}n_i)^{\gamma_i+\varepsilon}\sum_{(j_2, ..., j_k)} s_{j_2, ..., j_k}^{\gamma_1+\varepsilon}+\beta\lambda^{k-1}\delta^{-(c_2+\cdots+c_k)}\sum_{i=1}^k n_iF^\varepsilon_{\bar{c}_{\neq i}}(\bar{n}_{\neq i}).
    \end{align*}
    Recall that $\gamma_1<1$; without loss of generality assume that $\varepsilon<1-\gamma_1$. By H\"older's inequality,
    \begin{align*}
        \sum_{(j_2, ..., j_k)} s_{j_2, ..., j_k}^{\gamma_1+\varepsilon}&\leq \left(\sum_{(j_2, ..., j_k)} s_{j_2, ..., j_k}\right)^{\gamma_1+\varepsilon} (\lambda^{k-1}\delta^{-(c_2+\cdots+c_k)})^{1-\gamma_1-\varepsilon}\\
        &\leq (2^{k+2}\lambda^{k+1}\delta^{1-(c_2+\cdots+c_k)}n_1)^{\gamma_1+\varepsilon} (\lambda^{k-1}\delta^{-(c_2+\cdots+c_k)})^{1-\gamma_1-\varepsilon}\\
        &\leq 2^{2k}\lambda^{2k}\delta^{\gamma_1+\varepsilon-(c_2+\cdots+c_k)}n_1^{\gamma_1+\varepsilon}.
    \end{align*}
    Therefore,
    \begin{align*}
        \prod_{i=2}^k (4\delta^{c_i}n_i)^{\gamma_i+\varepsilon}\sum_{(j_2, ..., j_k)} s_{j_2, ..., j_k}^{\gamma_1+\varepsilon}&\leq 8^{2k}\lambda^{2k}\delta^{\gamma_1+\varepsilon-\sum_{i=2}^k c_i(1-\gamma_i-\varepsilon)}\prod_{i=1}^k n_i^{\gamma_i+\varepsilon}\\
        &=8^{2k}\lambda^{2k}\delta^{(1+c_2+\cdots+c_k)\varepsilon}E_{\bar{c}}(\bar{n})\prod_{i=1}^k n_i^\varepsilon\\
        &\leq \frac{1}{2}E_{\bar{c}}(\bar{n})\prod_{i=1}^k n_i^\varepsilon,
    \end{align*}
    for $\delta$ sufficiently small in terms of $\bar{c}$, $\lambda$, and $\varepsilon$. Thus,
    \[I_2\leq \frac{\alpha}{2}E_{\bar{c}}(\bar{n})\prod_{i=1}^k n_i^\varepsilon+\beta\lambda^{k-1}\delta^{-(c_2+\cdots+c_k)}\sum_{i=1}^k n_iF^\varepsilon_{\bar{c}_{\neq i}}(\bar{n}_{\neq i}).\]
    
    Putting all this together,  
    \begin{alignat*}{2}
        |E(P_1, ..., P_k)|&= I_1+I_2\\
        &\leq \frac{\alpha}{2}E_{\bar{c}}(\bar{n})\prod_{i=1}^k n_i^\varepsilon+\beta(1+\lambda^{k-1}\delta^{-(c_2+\cdots+c_k)})\sum_{i=1}^k n_iF^\varepsilon_{\bar{c}_{\neq i}}(\bar{n}_{\neq i})\\
        &\leq \left(\frac{\alpha}{2}+\tau k\beta(1+\lambda^{k-1}\delta^{-(c_2+\cdots+c_k)})\right) E_{\bar{c}}(\bar{n})\prod_{i=1}^k n_i^\varepsilon&&\;\;\;\;\text{by (\ref{betatoalpha})}\\
        &\leq \alpha E_{\bar{c}}(\bar{n})\prod_{i=1}^k n_i^\varepsilon,
    \end{alignat*}
    for $\alpha$ sufficiently large in terms of $\beta$, $\bar{c}$, $\lambda$, $\delta$, and $\tau$. Thus, (\ref{mainboundalphabeta}) holds as claimed.
\end{proof}
\begin{remark}
    Similarly to Remark \ref{binaryepsilonremark}, it is not hard to see that Theorem \ref{mainabstractbound} remains true if we remove all but one of the occurrences of $\varepsilon$ in each summand of $F^\varepsilon_{\bar{c}}(\bar{n})$, but we will not demonstrate this in detail. Note that Do makes a similar remark \cite[Remark 1.9(ii)]{do}.
\end{remark}
\section{Distality}\label{sectiondistality}
We conclude by describing a broad context in which the distal regularity lemma holds, and hence to which our main theorem can be applied --- distal structures. (However, distal structures are not the only source of relations satisfying the distal regularity lemma --- see Theorem \ref{bays}). This exposition only assumes a basic acquaintance with first-order logic.

Throughout this section, fix a set $M$. For a relation $\phi(x;y)$ on $M$ and $b\in M^y$, write $\phi(M^x; b):=\{a\in M^x: \phi(a,b)\}$. Now, fix a relation $\phi(x;y)$ on $M$, and denote by $\neg\phi(x;y)$ the relation on $M$ that is the complement of $\phi(x;y)$.
\begin{defn}
    For $a\in M^x$ and $B\subseteq M^y$, the \textit{$\phi$-type of $a$ over $B$ in $M$} is
    \[\text{tp}^M_\phi(a/B):=\bigcap_{\substack{b\in B\\ \phi(a;b)}}\phi(M^x; b)\cap \bigcap_{\substack{b\in B\\ \neg\phi(a;b)}}\neg\phi(M^x; b)\subseteq M^x,\]
    and we let $S^M_\phi(B):=\{\text{tp}^M_\phi(a/B): a\in M^x\}$ be the set of $\phi$-types over $B$ in $M$; equivalently, $S^M_\phi(B)$ is the set of Boolean atoms\footnote[2]{Given a set $X$ and a collection $\mathcal{S}=\{S_1, ..., S_n\}$ of subsets of $X$, a \textit{Boolean atom} of $\mathcal{S}$ is a non-empty set of the form $\bigcap_{i\in [n]}S_i^{\varepsilon(i)}$ for some $\varepsilon: [n]\to \{0,1\}$, where $S^1:=S$ and $S^0:=X\setminus S$ for $S\subseteq X$.}  of $\{\phi(M^x; b): b\in M^y\}$. We omit all occurrences of $M$ when it is obvious.
\end{defn}
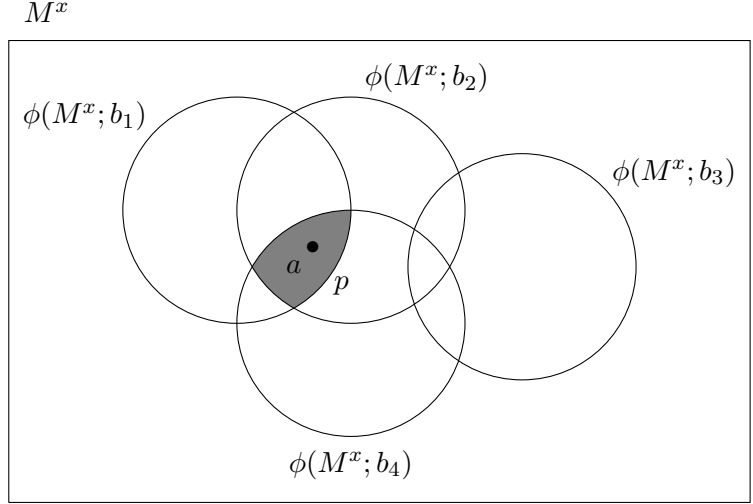
\begin{figure}
\centering
\begin{tikzpicture}[fill=gray]
\scope
\clip (3,4.5) circle (1.5);
\clip (4.5,3) circle (1.5);
\fill (4.5,4.5) circle (1.5);
\endscope
\draw (0,0.625) -- (9.75,0.625) -- (9.75,6.75) -- (0,6.75) -- (0,0.625);
\draw (3,4.5) circle (1.5);
\draw (4.5,4.5) circle (1.5);
\draw (4.5,3) circle (1.5);
\draw (6.75,3.75) circle (1.5);
\node at (1,5.75) {$\phi(M^x; b_1)$};
\node at (5.5,6.25) {$\phi(M^x; b_2)$};
\node at (8.75,5) {$\phi(M^x; b_3)$};
\node at (4.5,1.125) {$\phi(M^x; b_4)$};
\node at (4,4) {\textbullet};
\node at (3.75,3.75) {$a$};
\node at (4.375,3.5) {$p$};
\node at (0.5,7.125) {$M^x$};
\end{tikzpicture}
\caption{The $\phi$-types over $B$}
\label{phitypesfigure}
\end{figure}
We illustrate this definition with Figure \ref{phitypesfigure}, where $B=\{b_1, b_2, b_3, b_4\}$. For $b\in B$, $\phi(M^x; b)$ cuts out a subset of $M^x$, namely, the elements $a\in M^x$ such that $\phi(a;b)$. These form a Venn diagram in the universe $M^x$, whose non-empty regions are precisely the $\phi$-types over $B$, and the set of these is denoted by $S_\phi(B)$, forming a partition of $M^x$. Taking the shaded $\phi$-type $p$ as an example, for any $a\in p$, we have that $p=\text{tp}_\phi(a/B)$ is the $\phi$-type of $a$ over $B$, and every $a'\in p$ has the same $\phi$-type over $B$: for all $b\in B$, $\phi(a;b)$ if and only if $\phi(a';b)$.
\begin{defn}
    The \textit{dual shatter function} of $\phi$, $\pi^*_\phi: \N\to \N$, is such that for all $n\in \N$,
    \[\pi^*_\phi(n)=\max\{|S_\phi(B)|: B\subseteq M^y, |B|=n\}.\]
\end{defn}

It follows by straightforward counting that $\pi^*_\phi(n)\leq 2^n$ for all $n\in\N$. The following theorem, often known as the Sauer--Shelah Lemma, presents a striking dichotomy.
\begin{theorem}
    Either $\pi^*_\phi(n)=2^n$ for all $n\in\N$, or $\pi^*_\phi$ is bounded by a polynomial, that is, there is $d\in\R$ such that $\pi^*_\phi(n)=O(n^d)$ for all $n\in\N$.
\end{theorem}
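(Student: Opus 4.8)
The plan is to deduce this dichotomy from the classical Sauer--Shelah Lemma for set systems by passing to a dual incidence structure. To each $a\in M^x$ associate the set $S_a:=\{b\in M^y:\phi(a;b)\}\subseteq M^y$, and consider the family $\mathcal{S}:=\{S_a:a\in M^x\}$ of subsets of the ground set $M^y$. The first step is the translation: for a finite $B\subseteq M^y$, two elements $a,a'\in M^x$ have the same $\phi$-type over $B$ if and only if $S_a\cap B=S_{a'}\cap B$, so that $|S_\phi(B)|=|\{S_a\cap B:a\in M^x\}|$, and hence $\pi^*_\phi$ is precisely the primal shatter function of the set system $\mathcal{S}$.

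Next, define the \emph{dual VC dimension} $d^\ast$ of $\phi$ to be the supremum of those $n\in\N$ for which some $B\subseteq M^y$ with $|B|=n$ is shattered by $\mathcal{S}$, that is, $|\{S_a\cap B:a\in M^x\}|=2^n$. Since any subset of a shattered set is again shattered, $\pi^*_\phi(n)=2^n$ for a given $n$ forces $\pi^*_\phi(m)=2^m$ for every $m\leq n$; consequently $d^\ast=\infty$ exactly when $\pi^*_\phi(n)=2^n$ for all $n$, which is the first alternative of the dichotomy. In the remaining case $d^\ast<\infty$, fix any finite $B\subseteq M^y$ with $|B|=n$ and set $\mathcal{F}:=\{S_a\cap B:a\in M^x\}\subseteq 2^B$; by the choice of $d^\ast$, no subset of $B$ of cardinality exceeding $d^\ast$ is shattered by $\mathcal{F}$, so the Sauer--Shelah Lemma gives $|\mathcal{F}|\leq\sum_{i=0}^{d^\ast}\binom{n}{i}$. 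Taking the maximum over all such $B$ yields $\pi^*_\phi(n)\leq\sum_{i=0}^{d^\ast}\binom{n}{i}=O(n^{d^\ast})$, which is the second alternative.

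To keep this section self-contained I would also recall the short proof of Sauer--Shelah: enumerating $B$ and inducting on $|B|$ by means of the down-compression (shifting) operator --- or via Pajor's direct induction --- one shows that $|\mathcal{F}|\leq|\{A\subseteq B:A\text{ is shattered by }\mathcal{F}\}|$, and the right-hand side is at most $\sum_{i=0}^{d^\ast}\binom{n}{i}$ because every shattered set has at most $d^\ast$ elements. The whole argument is essentially bookkeeping once the duality dictionary is fixed; the only point that needs a moment's care is verifying that $\phi$-types over $B$ correspond exactly to traces of $\mathcal{S}$ on $B$, and there is no genuine obstacle beyond recalling (or reproving) the classical lemma.
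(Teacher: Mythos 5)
The paper does not prove this statement but simply cites a reference (``See, for example, \cite{sauer}''); your proposal supplies the standard argument that reference would contain, so there is no conflict to report. The argument is correct: the translation from $\phi$-types over $B$ to traces of the dual set system $\mathcal{S}=\{S_a:a\in M^x\}$ on $B$ is exactly right, the downward monotonicity of shattering gives the first alternative when the dual VC dimension $d^\ast$ is infinite, and the classical Sauer--Shelah bound $|\mathcal{F}|\le\sum_{i=0}^{d^\ast}\binom{n}{i}=O(n^{d^\ast})$ gives the second alternative when $d^\ast<\infty$. The only cosmetic mismatch is that the paper's dichotomy allows $d\in\R$ while your bound naturally produces the integer $d^\ast$, which of course suffices.
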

\begin{proof}
    See, for example, \cite{sauer}.
\end{proof}
That $\pi^*_\phi$ is bounded by a polynomial is one of many equivalent characterisations for $\phi$ to be \textit{NIP (not the independence property)}, a notion that has had innumerable applications in fields such as model theory, combinatorics, and machine learning. Distality refines this notion by requiring the $\phi$-types to be covered `uniformly definably'.
\begin{defn}
    Let $\psi(x; y_1, ..., y_k)$ be a relation on $M$, where $|y_1|=\cdots=|y_k|=|y|$. Say that $\psi$ is a \textit{strong honest definition} for $\phi$ if, for all finite $B\subseteq M^y$ with $|B|\geq 2$\footnote[3]{We require $|B|\geq 2$ so that, roughly speaking, $B$ contains enough elements to `code' information. For instance, it is often convenient to have multiple formulas together performing the role of $\psi$, but passing between one formula and multiple formulas requires $|B|\geq 2$ --- see, for example, \cite[Proposition 2.3]{mypaper}.} and $a\in M^x$, there are $b_1, ..., b_k\in B$ such that $\psi(a; b_1, ..., b_k)$ and $\psi(M^x; b_1, ..., b_k)\subseteq \text{tp}_\phi(a/B)$.
\end{defn}

Note then that the relation $\psi$ induces a \textit{distal (cell) decomposition} for $\phi$: a map $\mathcal{T}$ that assigns, to each finite $B\subseteq M^y$ with $|B|\geq 2$, a cover $\mathcal{T}(B)$ of $M^x$ that refines the partition $S_\phi(B)$, such that $\mathcal{T}(B)\subseteq\{\psi(M^x; b_1, ..., b_k): b_i\in B\}$. The key here is that $\psi$ works for $B\subseteq M^y$ of any finite size at least 2. A $\phi$-type over $B$ can always be defined by a relation of the form
\[\bigwedge_{b\in B'}\phi(x; b)\wedge \bigwedge_{b\in B\setminus B'}\neg\phi(x; b)\]
for some $B'\subseteq B$, but this relation has $|B|$-many parameters. In contrast, the relation $\psi$ has a fixed number $k$ of parameters, and instances of $\psi$ can be used to refine $S_\phi(B)$ for any finite $B\subseteq M^y$ with $|B|\geq 2$.

Note then that $\phi$ is indeed NIP: $S_\phi(B)$ can be refined by a subset of $\{\psi(M^x; b_1, ..., b_k): b_i\in B\}$, so $|S_\phi(B)|\leq |B|^k$.
\begin{defn}
    Let $\mathcal{M}$ be a structure. Say that $\mathcal{M}$ is \textit{distal} if every definable relation $\phi(x;y)$ has a definable (in $\mathcal{M}$) strong honest definition $\psi(x;y_1, ..., y_k)$.
\end{defn}
Note that this is not the original definition of distality as introduced by Simon \cite{simondistal}, but was proven to be equivalent to it by Chernikov and Simon \cite{distaldefn}. Observe that if a structure $\mathcal{M}$ is distal, then it is NIP (that is, every formula in $\mathcal{M}$ is NIP).

The class of distal structures includes the class of \textit{o-minimal} structures, such as the real ordered field $(\R,+,\times,<)$ and its expansion $(\R,+,\times,<,\text{exp})$, where $\text{exp}: \R\to\R$ is the exponential function. That o-minimal structures are distal was shown by Simon \cite{simondistal}. As mentioned in Section \ref{sectionintro}, there are also many distal structures that are not o-minimal. One example is the p-adics $(\mathbb{Q}_p, +, \cdot, v(x)\geq v(y))$, shown by Simon \cite{simondistal} to be distal. Another example is Presburger arithmetic $(\Z,+,<)$, the distality of which follows quickly from standard quantifier elimination results (or see \cite{regularitylemma} for an alternative proof). In \cite{mypaper}, we proved that distality is preserved when $(\Z,+,<)$ is expanded by a `congruence-periodic' and `sparse' predicate $R\subseteq \N$, such as $\{2^n: n\in \N\}$, $\{n!: n\in \N\}$, and the set of Fibonacci numbers.

We had previously wondered if distal structures were the only source of relations satisfying the distal regularity lemma. That is, if $\phi(x_1, ..., x_k)$ is a relation on a set $M$ satisfying the distal regularity lemma, must the structure $(M,\phi)$ admit a distal expansion? The answer is no: we are grateful to Martin Bays for suggesting the following counterexample to us in personal communication.
\begin{theorem}\label{bays}
    Let $K$ be a finitely generated extension of $\mathbb{F}_p$, such as $K=\mathbb{F}_p(t)$, and let $\phi(x,y;m,c):=(y=mx+c)$ be the point-line incidence relation. Then $\phi$ satisfies the distal regularity lemma as a relation on $K$, but the structure $(K,\phi)$ does not admit a distal expansion.
\end{theorem}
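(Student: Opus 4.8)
I would treat the two assertions of the theorem separately, beginning with the second, which is the more robust.

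\smallskip
\noindent\textbf{No distal expansion.} The plan is to show that $(K,\phi)$ and the pure field $(K,+,\cdot)$ are interdefinable, so that any expansion of the former is an expansion of the latter. The constant $0$ is $\emptyset$-definable by the formula $\phi(z,z;z,z)$ (which asserts $z=z^2+z$, i.e.\ $z=0$), and $1$ by $\forall a\,\phi(a,a;z,0)$ (which asserts $za=a$ for all $a$, forcing $z=1$ since $K$ is infinite); then $x+y$ is the unique $z$ with $\phi(1,z;x,y)$ and $xy$ the unique $z$ with $\phi(x,z;y,0)$. Consequently a distal expansion of $(K,\phi)$ would in particular be an expansion of $(K,+,\cdot)$, and since distal structures are NIP and NIP is inherited by reducts, the pure field $K$ would be NIP. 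But $K$, being infinite and finitely generated of characteristic $p$, carries a nontrivial discrete valuation $v$ (for $K=\mathbb{F}_p(t)$, the $t$-adic valuation), and for $u\in K$ with $v(u)<0$ and $p\nmid v(u)$ (e.g.\ $u=1/t$) a comparison of valuations shows $u\notin\{x^{p}-x:x\in K\}$; hence $X^{p}-X-u$ is irreducible and $K$ has an Artin--Schreier extension. This contradicts the Kaplan--Scanlon--Wagner theorem that infinite NIP fields of characteristic $p$ are Artin--Schreier closed, so $(K,\phi)$ admits no distal (indeed no NIP) expansion.

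\smallskip
\noindent\textbf{The distal regularity lemma for $\phi$.} Here $\phi$ lives in a structure that is not even NIP, so I would argue directly that the point--line incidence bipartite graph --- points $(x,y)\in K^2$ against non-vertical lines $(m,c)\in K^2$ --- has a strong distal regularity tuple, using two features valid over any field. First, the graph is $K_{2,2}$-free (two distinct points determine at most one non-vertical line and two distinct such lines meet in at most one point), hence sparse. Second, the family of all finite incidence graphs over $K$ is \emph{uniformly stable}: there is no induced half-graph of order $3$, since once a point lies on two of the lines it is pinned as their intersection, and this rules out any staircase of length $\geq 3$. Applying the stable regularity lemma of Malliaris and Shelah \cite{malliarisshelah} to this uniformly stable family, I would obtain, for each $\delta>0$, an equipartition of the points and of the lines into $\mathrm{poly}(1/\delta)$ parts whose cells are all $\delta$-regular with edge density in $[0,\delta]\cup[1-\delta,1]$; $K_{2,2}$-freeness then forbids cells of density $\geq 1-\delta$ once both parts are large, so, after isolating a base case of bounded size, every cell has density $\leq\delta$.

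The final --- and, I expect, decisive --- step is to upgrade these low-density cells to genuinely homogeneous cells (complete or empty bipartite graphs) while adding only polynomially many further parts and discarding only a $\lambda\delta$-fraction of the grid. Since each cell is again $K_{2,2}$-free and uniformly stable, the same machinery can be reapplied inside it, and the sparsity should let one peel the incidences off into sub-cells of negligible total area (say, via a dyadic degree decomposition within each cell); alternatively, if one can verify that the incidence formula has no finite cover property --- plausible given how rigidly linear it is --- then the stable regularity lemma already delivers homogeneous cells and this step is free. The obstacle I anticipate is precisely quantitative: the stable regularity lemma never attains density exactly $0$ or $1$, and showing that $K_{2,2}$-freeness closes that gap within a number of refinements bounded in terms of $\delta$ alone (and not in terms of $|P|,|Q|$) is where the care lies. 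Failing that, one could build the partition by hand in the style of the cutting lemmas over $\R$, but carried out so as to invoke neither an order nor the valuation --- partitioning the lines by a coarse combinatorial cell decomposition of the plane and the points by the cell containing them.
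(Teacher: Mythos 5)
Your argument for the second assertion (no distal expansion) is correct and follows essentially the same route as the paper's: both reduce to showing that the pure field $K$ is not NIP by way of Kaplan--Scanlon--Wagner, after observing that $+$ and $\times$ are definable from $\phi$. You invoke the theorem via an explicit non-surjectivity of the Artin--Schreier map (using a discrete valuation on $K$), whereas the paper invokes \cite[Corollary 4.5]{kaplanscanlonwagner} directly: an infinite NIP field of characteristic $p$ contains $\mathbb{F}_p^{\text{alg}}$, while $K$ cannot, since subextensions of finitely generated extensions are finitely generated. The two are equivalent in substance; yours is more concrete, the paper's slightly shorter.

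For the first assertion there is a genuine gap, and you have in fact put your finger on exactly where it is. The distal regularity lemma (Definition \ref{tupledefn}) requires the good cells $A_i\times B_j$ to be \emph{$E$-homogeneous}, i.e.\ $E(A_i,B_j)$ is either all of $A_i\times B_j$ or empty --- not merely of density $\leq\delta$ or $\geq 1-\delta$. The Malliaris--Shelah stable regularity lemma delivers only the latter. Your correct observation that the incidence relation is $K_{2,2}$-free and $3$-stable, and that $K_{2,2}$-freeness kills the high-density cells once the parts are large, therefore gets you to a partition into polynomially many parts all of density $\leq\delta$, but no further. Upgrading a positive-but-small-density $K_{2,2}$-free cell to an exactly homogeneous one by iterated refinement is precisely what is not known to terminate in a number of steps depending only on $\delta$; the ``dyadic degree decomposition'' and the nfcp alternative you float are not established, and you say as much. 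The paper takes an entirely different route that sidesteps density altogether: by \cite[Lemma 4.1]{bays}, $K$ carries a valuation with finite residue field, so it embeds into an algebraically closed valued field $\mathcal{K}^*$, which is NIP; by \cite[Theorem 5.6]{bays}, $\phi$ has a strong honest definition over $\mathcal{K}^*$; from this one extracts a distal cell decomposition and then a cutting lemma in the style of \cite[Theorem 3.2]{cuttinglemma}, which yields homogeneous cells directly as in \cite[Theorem 5.8]{regularitylemma}. In other words, the homogeneity is supplied by the distal cell decomposition coming from the valued-field geometry, not by sparsity or stability of the incidence graph. You would need to either supply a comparable geometric decomposition of $K^2$ (which, absent an order or a valuation made explicit, is the whole difficulty) or find a quantitative argument closing the $\delta$-to-$0$ gap for $K_{2,2}$-free stable graphs, neither of which your proposal does.
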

\begin{proof}
    We first argue that $\phi$ satisfies the distal regularity lemma as a relation on $K$. By \cite[Lemma 4.1]{bays}, $K$ admits a valuation $v$ with finite residue field, so we may view $K$ as a structure $\mathcal{K}$ over the language $\{+,\times,\leq\}$ of valued fields, where $x\leq  y:\Leftrightarrow v(x)\leq v(y)$ in $\mathcal{K}$. Let $\mathcal{K}^*$ be an algebraically closed valued field such that $\mathcal{K}\subseteq \mathcal{K}^*$; it is folklore that $\mathcal{K}^*$ is NIP.
    
    By \cite[Theorem 5.6]{bays}, as a relation on $K$, $\phi$ has a strong honest definition $\psi(x,y;m_1,c_1, ..., m_k,c_k)$ definable in $\mathcal{K}^*$, in the following sense: for all finite $B\subseteq K^2$ with $|B|\geq 2$ and $a\in K^2$, there are $b_1, ..., b_k\in B$ such that $\mathcal{K}^*\models \psi(a; b_1, ..., b_k)$ and, for all $a'\in (\mathcal{K}^*)^2$, if $\mathcal{K}^*\models \psi(a';b_1, ..., b_k)$ then $\mathcal{K}^*\models \phi(a;b')\leftrightarrow \phi(a';b')$ for all $b'\in B$. The proof of \cite[Lemma 3.6]{cuttinglemma} now gives a `cutting lemma' for $\phi$ as follows. For all finite $B\subseteq K^2$ with $|B|\geq 2$ and $\delta\in (0,1)$, there is a cover $\mathcal{F}\subseteq \{\psi((\mathcal{K}^*)^2; b_1, ..., b_k): b_i\in B\}$ of $(\mathcal{K}^*)^2$, such that $|\mathcal{F}|\leq \poly_{\phi,\psi}(\delta^{-1})$ and for all $F\in\mathcal{F}$,
    \[\#\{b\in B: F\subseteq \phi((\mathcal{K}^*)^2;b)\text{ or }F\subseteq \neg\phi((\mathcal{K}^*)^2;b)\}\geq (1-\delta)|B|.\]

    Let $P,Q\subseteq K^2$ be finite with $|Q|\geq 2$ and $\delta\in (0,1)$; we give appropriate partitions of $P$ and $Q$ to show that $\phi$ satisfies the distal regularity lemma. Applying the cutting lemma above with $B=Q$, we obtain a cover $\mathcal{F}\subseteq \{\psi((\mathcal{K}^*)^2; b_1, ..., b_k): b_i\in Q\}$ of $(\mathcal{K}^*)^2$. For all $F\in\mathcal{F}$ and $\sigma\in\{0,1\}$, let $D_F^\sigma:=\{d\in (\mathcal{K}^*)^2: F\subseteq \phi^\sigma((\mathcal{K}^*)^2;d)\}$. Let $\mathcal{G}$ be the set of Boolean atoms of $\{D_F^\sigma: F\in\mathcal{F}, \sigma\in\{0,1\}\}$, so $\mathcal{G}$ is a partition of $(\mathcal{K}^*)^2$. Since $\mathcal{K}^*$ is NIP, $|\mathcal{G}|\leq\poly_{\phi,\psi,\mathcal{K}^*}(|\mathcal{F}|)$, and so $|\mathcal{G}|\leq \poly_{\phi,\psi,\mathcal{K}^*}(\delta^{-1})$. Let $\mathcal{F}_0$ be any partition of $(\mathcal{K}^*)^2$ refining $\mathcal{F}$ such that $|\mathcal{F}_0|\leq |\mathcal{F}|\leq \poly_{\phi,\psi}(\delta^{-1})$. The reader is invited to check that the partitions $\mathcal{F}_0\cap P$ and $\mathcal{G}\cap Q$ are such that $\sum |F\times G|\leq \delta |P||Q|$, where the sum ranges over all $(F,G)\in (\mathcal{F}_0\cap P)\times (\mathcal{G}\cap Q)$ such that $F\times G$ is not $\phi$-homogeneous.

    It remains to argue that $(K,\phi)$ does not admit a distal expansion. Let $\overline{\mathcal{K}}=(K,+,\times)$ be the field structure on $K$. Now $K\not\supseteq\mathbb{F}_p^{\text{alg}}$ since every sub-extension of a finitely generated field extension is finitely generated (see, for example, \cite[Theorem 24.9]{algebratextbook}), so $\overline{\mathcal{K}}$ is not NIP by \cite[Corollary 4.5]{kaplanscanlonwagner}. Thus, there is a formula $\psi$ in $\overline{\mathcal{K}}$ that is not NIP. Now, the field operations $+$ and $\times$ are definable in $(K,\phi)$: indeed, $0$ and $1$ are $\emptyset$-definable in $(K,\phi)$, and for all $p,q,r\in K$, $p+q=r\Leftrightarrow \phi(p,r;1,q)$ and $p\times q=r\Leftrightarrow \phi(p,r;q,0)$. Thus, $\psi$ is definable in $(K,\phi)$ and all of its expansions. We conclude that every expansion of $(K,\phi)$ is not NIP and hence not distal.
\end{proof}
\vspace{-7pt}
\bibliographystyle{plainurl}
\bibliography{bib}

@article{cuttinglemma,
    author = {Artem Chernikov and David Galvin and Sergei Starchenko},
    title = "{Cutting lemma and Zarankiewicz’s problem in distal structures}",
    journal = {Selecta Mathematica},
    volume = {26},
    number = {25},
    pages = {471-508},
    year = {2020},
    doi = {10.1007/s00029-020-0551-2},
    url = {https://doi.org/10.1007/s00029-020-0551-2},
}

@article{fox,
    author    = "Jacob Fox and J\'{a}nos Pach and Adam Sheffer and Andrew Suk and Joshua Zahl",
    title     = "A semi-algebraic version of {Zarankiewicz's} problem",
    journal   = "Journal of the European Mathematical Society",
    volume   = "19",
    number   = "6",
    pages    = "1785--1810",
    year      = "2017",
doi = "10.4171/JEMS/705"
}

@inproceedings{szemerediregularitylemma,
    author = {E. Szemer\'edi},
    title = {Regular partitions of graphs},
    booktitle = {Probl\`emes combinatoires et
th\'eorie des graphes (Univ. Orsay, Orsay, 1976)},
series = {Colloq. Internat. CNRS},
volume = {260},
    year = {1978},
    pages = {399--401}
}

@article{mypaper,
title={Distal expansions of {Presburger} arithmetic by a sparse predicate},
DOI={10.1017/jsl.2025.10125},
journal={The Journal of Symbolic Logic},
author={Mervyn Tong},
year={2025},
pages={1--26}}

@article{regularitylemma,
      title={Regularity lemma for distal structures}, 
      author={Artem Chernikov and Sergei Starchenko},
      year={2018},
    journal = {Journal of the European Mathematical Society},
    volume = {20},
    number = {10},
    pages = {2437--2466},
doi = {10.4171/JEMS/816}
}

@article{do,
title = {Zarankiewicz's problem for semi-algebraic hypergraphs},
journal = {Journal of Combinatorial Theory, Series A},
volume = {158},
pages = {621--642},
year = {2018},
issn = {0097-3165},
doi = {10.1016/j.jcta.2018.04.007},
author = {Do, Thao T.},
}

@article{kovarisosturan,
author = {K\H{o}v\'ari, T. and S\'os, V. T. and Tur\'an, P.},
journal = {Colloquium Mathematicae},
keywords = {combinatorics},
language = {eng},
number = {1},
pages = {50-57},
title = {{On a problem of K. Zarankiewicz}},
volume = {3},
year = {1954},
doi = {10.4064/cm-3-1-50-57}
}

@article{erdoshypergraphs,
title = {On extremal problems of graphs and generalized graphs},
journal = {Israel Journal of Mathematics},
volume = {2},
number = {3},
pages = {183--190},
year = {1964},
author = {Erd\H{o}s, Paul},
doi = {10.1007/BF02759942}
}

@article{distaldefn,
 ISSN = {00029947},
 author = {Artem Chernikov and Pierre Simon},
 journal = {Transactions of the American Mathematical Society},
 number = {7},
 pages = {5217--5235},
 publisher = {American Mathematical Society},
 title = {Externally definable sets and dependent pairs {II}},
 urldate = {2023-10-05},
 volume = {367},
 year = {2015},
doi = {10.1090/S0002-9947-2015-06210-2}
}

@article{simondistal,
    title = {Distal and non-distal {NIP} theories},
    journal = {Annals of Pure and Applied Logic},
    volume = {164},
    number = {3},
    pages = {294--318},
    year = {2013},
    issn = {0168-0072},
    doi = {https://doi.org/10.1016/j.apal.2012.10.015},
    author = {Pierre Simon}}

@article{semialgebraicregularity,
author = {Fox, Jacob and Pach, J\'{a}nos and Suk, Andrew},
title = {A Polynomial Regularity Lemma for Semialgebraic Hypergraphs and Its Applications in Geometry and Property Testing},
journal = {SIAM Journal on Computing},
volume = {45},
number = {6},
pages = {2199-2223},
year = {2016},
doi = {10.1137/15M1007355},
}

@article{gowers1997,
  title={{Lower bounds of tower type for Szemer{\'e}di's uniformity lemma}},
  author={Gowers, William T.},
  journal={Geometric \& Functional Analysis GAFA},
  year={1997},
  volume={7},
  pages={322-337},
doi = {10.1007/PL00001621}
}

@article{sauer,
title = {On the density of families of sets},
journal = {Journal of Combinatorial Theory, Series A},
volume = {13},
number = {1},
pages = {145-147},
year = {1972},
issn = {0097-3165},
doi = {10.1016/0097-3165(72)90019-2},
author = {N. Sauer},
}

@article{malliarisshelah,
 ISSN = {00029947},
 author = {M. Malliaris and S. Shelah},
 journal = {Transactions of the American Mathematical Society},
 number = {3},
 pages = {1551--1585},
 publisher = {American Mathematical Society},
 title = {Regularity lemmas for stable graphs},
 urldate = {2024-10-17},
 volume = {366},
 year = {2014},
doi = {10.1090/S0002-9947-2013-05820-5}
}

@article{nipregularitylemma,
 author = {Jacob Fox and J\'{a}nos Pach and Andrew Suk},
 journal = {Discrete \& Computational Geometry},
 pages = {809--829},
 title = {{Erd\H{o}s--Hajnal Conjecture for Graphs with Bounded VC-Dimension}},
 volume = {61},
 year = {2019},
doi = {10.4230/LIPIcs.SoCG.2017.43}
}

@misc{hans,
      title={{Multilevel polynomial partitioning and semialgebraic hypergraphs: regularity, Tur\'an, and Zarankiewicz results}}, 
      author={Jonathan Tidor and Hung-Hsun Hans Yu},
      year={2024},
      archivePrefix={arXiv},
      primaryClass={math.CO},
      eprint={2407.20221}, 
}

@article{bays,
     author = {Bays, Martin and Martin, Jean-Fran\c{c}ois},
     title = {Incidence bounds in positive characteristic via valuations and distality},
     journal = {Annales Henri Lebesgue},
     pages = {627--641},
     publisher = {\'ENS Rennes},
     volume = {6},
     year = {2023},
     doi = {10.5802/ahl.174},
     language = {en},
}

@article{kaplanscanlonwagner,
     author = {Itay Kaplan and Thomas Scanlon and Frank O. Wagner},
     title = {{Artin-Schreier extensions in NIP and simple fields}},
     journal = {Israel Journal of Mathematics},
     pages = {141--153},
     volume = {185},
     year = {2011},
     doi = {10.1007/s11856-011-0104-7},
     url = {https://doi.org/10.1007/s11856-011-0104-7}
}

@book{algebratextbook,
author = {Isaacs, I. Martin},
isbn = {9780821847992},
publisher = {American Mathematical Society},
series = {Graduate Studies in Mathematics},
volume = {100},
title = {Algebra: A Graduate Course},
year = {1994},
}

@article{janzerpohoata,
     author = {Oliver Janzer and Cosmin Pohoata},
     title = {{On the Zarankiewicz Problem for Graphs with Bounded VC-Dimension}},
     journal = {Combinatorica},
     pages = {839--848},
     volume = {44},
     year = {2024},
     doi = {10.1007/s00493-024-00095-2},
     url = {https://doi.org/10.1007/s00493-024-00095-2}
}
\end{document}